\documentclass[11pt, leqno]{article}

\usepackage{amsmath,amssymb,amsthm, epsfig}

\usepackage[numbers]{natbib}
\usepackage{hyperref}
\usepackage{amsmath}
\theoremstyle{plain}
\theoremstyle{definition}
\allowdisplaybreaks
\usepackage[pdftex]{color}
\usepackage{comment}

\usepackage{mathtools}
\usepackage{color}

\usepackage{ulem}

\usepackage{mathrsfs}

\usepackage{wasysym}

\usepackage{stackrel}

\usepackage{amsthm} 

\newtheorem{assumptionBase}{\hspace{-5pt}} 
 
\newenvironment{assumption}[1][]
  {\begin{assumptionBase}\normalfont\textbf{\ifx#1\empty\else #1\fi} \normalfont}
  {\end{assumptionBase}}

\usepackage{pgfplots}
\pgfplotsset{compat=newest}



\title{\large{\bf Regularity Estimates for Fully Nonlinear Dead-Core Problems with a Hamiltonian Term}}
\author{\it by \smallskip \\ Rafael R. Costa \footnote{\noindent Universidade Federal do Rio Grande do Sul, Porto Alegre-RS, Brazil \noindent \texttt{E-mail address: \url{ramos.costa@ufrgs.br}}} \quad and\quad Ginaldo S. S\'{a}\footnote{\noindent Universidade Estadual de Campinas - UNICAMP. Departamento  de Matemática. Campinas - SP, Brazil. \noindent \texttt{E-mail address: \url{ginaldo@ime.unicamp.br}}}
}



\newlength{\hchng}
\newlength{\vchng}
\setlength{\hchng}{0.55in} \setlength{\vchng}{0.55in}
\addtolength{\oddsidemargin}{-\hchng}
\addtolength{\textwidth}{2\hchng} \addtolength{\topmargin}{-\vchng}
\addtolength{\textheight}{2\vchng}



\def \R {\mathbb{R}}

\def \M {\text{Sym}}

\def \T {\text{Tr}}


\newtheorem{theorem}{Theorem}[section]

\newtheorem{lemma}[theorem]{Lemma}

\newtheorem{corollary}[theorem]{Corollary}

\theoremstyle{definition}

\newtheorem{definition}[theorem]{Definition}

\newtheorem{example}[theorem]{Example}

\theoremstyle{remark}

\newtheorem{remark}[theorem]{Remark}

\numberwithin{equation}{section}


\newcommand{\intav}[1]{\mathchoice {\mathop{\vrule width 6pt height 3 pt depth  -2.5pt
\kern -8pt \intop}\nolimits_{\kern -6pt#1}} {\mathop{\vrule width
5pt height 3  pt depth -2.6pt \kern -6pt \intop}\nolimits_{#1}}
{\mathop{\vrule width 5pt height 3 pt depth -2.6pt \kern -6pt
\intop}\nolimits_{#1}} {\mathop{\vrule width 5pt height 3 pt depth
-2.6pt \kern -6pt \intop}\nolimits_{#1}}}



\begin{document}
\maketitle

\begin{abstract}
In this paper, we present a problem involving fully nonlinear elliptic operators with Hamiltonian, which can present a singularity or degenerate as the gradient approaches the origin. The model studied here, allows the appearance of plateau zones, i.e. unknown regions of the domain in which the non-negative solutions vanishes. We show an improvement in regularity along the free boundary of the problem, and with some hypotheses
on the exponents of the equation we proved the optimality of the growth rate with the help of the non-degeneracy also obtained here. In addition, some more applications of the growth results on the free boundary are obtained, such as: the positivity of solutions and also information on the Hausdorff measure of the boundary of the coincidence set. 
\medskip
\vspace{0.2cm}

\noindent 
\textbf{Keywords}: Dead-core problem, improved regularity, fully nonlinear elliptic operator (degenerate/singular), Hamiltonian term.
\vspace{0.2cm}
	
\noindent \textbf{2020 Mathematics Subject Classification: 35B65, 35J70, 35J75, 35R35}.
\end{abstract}

\newpage

\section{Introduction}

In this work, we investigate geometric properties of the viscosity solution to diffusion-reaction problems ruled by a uniformly elliptic operator with a Hamiltonian term, which may exhibit degenerate/singular behavior when the gradient vanishes, along with a strong absorption term:
\begin{equation}\label{equation}
|\nabla{u}|^pF(D^2u)+\mathfrak{a}(x)|\nabla u|^q=f(x,u)\lesssim\lambda_0(x)u(x)^\mu_+\quad\text{in} \quad \Omega,
\end{equation}
where $\Omega\subset \mathbb{R}^n$ is a smooth, open and bounded domain, $\mathfrak{a},\lambda_0\in \mathrm{C}^0(\overline{\Omega})$ with $\inf\lambda_0> 0$, $F:\M(n)\rightarrow \mathbb{R}$ is a uniformly elliptic operator, where $\M(n)$ denote the space of symmetric matrices of size $n\times n$ and
\begin{itemize}
\item $-1<p$ indicates the degree of singularity/degeneracy of the problem;
\item $0\leq q<p+1$ sublinear exponent;
\item $0\leq \mu<p+1$ represents the strong absorption term of the model. 
\end{itemize}
Additionally, we impose the boundary condition
\begin{equation}\label{Bound-Cond}
 u(x) = g(x) \quad \text{on } \partial \Omega,
\end{equation}
where the boundary datum is assumed to be continuous and non-negative, i.e., \( 0 \leq g \in \mathrm{C}^0(\partial \Omega) \).

The equation approach \eqref{equation} is classified as an equation with a strong absorption term, and the set $\partial\{u>0\}\cap \Omega$ constitutes the free boundary of the problem. The study of such problems has an important role in the construction of abstract models that describe, for example, diffusion phenomena, where $u$ represents the density of a chemical reagent or gas, while $\lambda_0$ controls the ratio between the reaction rate and the diffusion rate. An illustrative example of an equation modeling the density of the reactant $u$ in a stationary situation, see \cite{ADD1,Aris1,Aris2} for some motivations, is given by
$$
\left\{
\begin{array}{rccccc}
-\Delta u+\lambda_0f(u)\chi_{\{u>0\}} &=& 0 & \text{in} & \Omega,\\
u(x) & = & 1& \text{on} & \partial\Omega,
\end{array}
\right.
$$
where $\Omega\subset\R^n$ is a regular and bounded domain and $f$ is a continuous and increasing reaction term, with $f(0)=0$. In this context, $f(u)$ represents the ratio of the reaction rate at concentration $u$ to the reaction rate at concentration unity. From a mathematical perspective, these models are interesting due to the existence of a region where $u\equiv 0$, called the \textit{dead-core}. In the applied context, the \textit{dead-core} corresponds to regions where the density of the substance (or gas) vanishes.

Problems involving dead-core phenomena have been the subject of intense investigation in recent years. Results concerning the existence of solutions, estimates, and geometric properties, among other aspects, have been extensively developed (cf. \cite{BV-P, DH84, DH}, just to cite a few). Another interesting aspect of the mathematical analysis of dead-core problems is its connection with various free boundary problems, see the seminal works \cite{AltPhillips,Diaz1,FP,Phillips} on problems with variational structure. See also \cite{Leitao,ET} for the non-variational counterpart. 

The equation (1.1) has a singular/degenerate structure along the set
\[
\mathcal{C}(u)\coloneqq \{x\in \Omega:\, |\nabla u|=0\}.
\]
Such structure makes it difficult to obtain uniform regularity estimates, especially in neighborhoods of the set \( \mathcal{C}(u)\), where the ellipticity of the operator deteriorates.

In \cite{IS}, the authors proved interior Hölder regularity of the gradient for viscosity solutions in the following setting:
\begin{equation}
|\nabla u|^\gamma F(D^2u) =f(x)\in L^\infty(\Omega),
\end{equation}
where \( \gamma\geq 0 \). In \cite{DRT01}, the authors obtained an optimal \( \mathrm{C}^{1,\alpha} \)-type estimate for the solutions of equation (1.1) along the set of critical points.

In a sequence of celebrated works, Birindelli and Demengel addressed variants of singular/degenerate problems. In \cite{BirDem06}, the authors considered singular/degenerate equations of the form
\begin{equation}\label{BD_1}
|\nabla u|^\gamma F(D^2u) =f(x,\nabla u),
\end{equation}
where \( F \) is fully nonlinear uniformly elliptic, \( \gamma > -1 \), and \( f \) has, for the most part, growth of order \( 1+\gamma \) in the gradient term. In this work, they proved Lipschitz regularity results for viscosity solutions of (1.1). Furthermore, in \cite{BirDem10}, the authors proved regularity $\mathrm{C}^{1,\alpha}$ up to the boundary in the case $\gamma\leq 0$, for the Dirichlet problem with homogeneous boundary conditions.

In the context of fully nonlinear models of singular/degenerate type with Hamiltonian term, Birindelli and Demengel, in \cite{BirDem16}, addressed the boundary regularity of viscosity solutions to the following problem
\begin{equation}
    |\nabla u|^p F(D^2 u)+ |\nabla u|^q b(x) = f(x)\quad\text{in}\quad \Omega,
\end{equation}
where $\Omega\subset \mathbb{R}^n$ 
is a bounded domain of class $\mathrm{C}^2$, $p>-1$, $q\in [0,p+1]$, $b,f\in \mathrm{C}(\overline{\Omega})$ and $F$ is a uniformly elliptic operator. In this context, the authors proved that given a boundary datum $\varphi\in\mathrm{C}^{1,\gamma_0}(\partial\Omega)$, there exist constants $\gamma\in (0,\gamma_0)$, with $\gamma=\gamma(\lambda,\Lambda,p,q)$ and $\mathrm{C}=\mathrm{C}(\gamma)$ such that any bounded viscosity solution $u$ of
$$
\left\{\begin{array}{rrlcc}
   |\nabla u|^pF(D^2u)+|\nabla u|^q b(x) & = & f(x) & \text{in} & \Omega,\\
    u & = & \varphi  & \text{in} & \partial\Omega,
\end{array}
\right.
$$
is $\mathrm{C}^{1,\gamma}$ and for $p<q+1$
$$
\|u\|_{\mathrm{C}^{1,\gamma}(\overline{\Omega})}\leq \mathrm{C}\left(\|u\|_\infty+\|b\|_\infty^{\frac{1}{1+p-q}}+\|f\|_{\infty}^{\frac{1}{1+p}}+\|\varphi\|_{\mathrm{C}^{1,\gamma_0}(\partial\Omega)}\right).
$$
Moreover, for $p=q+1$ (linear case), the exponent $\gamma$ also depends on $\|b\|_\infty$.

We also emphasize the contributions of works \cite{BirDem04, BirDem06, BirDem07, BirDem09, BirDem10}, which address fundamental results concerning the existence and uniqueness of solutions, maximum principles, and Harnack type inequalities.

When the right-hand side of \eqref{equation} is in $L^{\infty}$ and $F$ is a concave/convex operator, the best regularity one can expect is $\mathrm{C}^{1,\frac{1}{1+p}}_{\operatorname{loc}}$ as $p\geq 0$, see \cite{DRT01,IS}. However, through an iterative geometric approach, our first result, see Theorem \ref{improvement_regularity}, establishes a best pointwise regularity $\mathrm{C}^{1,\beta}$ along the set of free boundary points $\partial\{u>0\}$, whenever $0<\mu<p+1$ and $0<q<p+1$, since
$$
    \beta=\min\left\{\frac{p+2-q}{p+1-q},\frac{p+2}{p+1-\mu}\right\}>1+\frac{1}{p+1}.
$$
In other words, even if we add a Hamiltonian term, we obtain a pointwise regularity at free boundary that is better than the local optimum estimate for degenerate uniformly elliptic equations with the convex operator, which is the content of Theorem \ref{improvement_regularity}.

It is important to note that in the paper \cite{Leitao}, the authors approach a similar analysis, the difference being that the drift term they deal with $(\mathfrak{b}(x)\cdot\nabla u|\nabla u|^p)$ in an equation whose prototype is
$$
    |\nabla u|^pF(D^2u,x)+\mathfrak{b}(x)\cdot\nabla u|\nabla u|^p=\mathfrak{\lambda}_0(x)u(x)^{\mu}_+,
$$
has an agreement with the rescaling of the entire equation. Our case can be seen as a continuation of this work, since when $q=p+1$ in \eqref{equation}, the Hamiltonian term has the same order as the drift term in \cite{Leitao}. In this sense, considering $0\leq q<p+1$ brings significant difficulties, since the rescalings of the equation do not behave very well, which does not guarantee, for example, that the growth rate on the free boundary is optimal. However, the novelty of this work is to provide conditions to guarantee optimal growth on the free boundary through non-degeneracy, see Theorem \ref{ND_estimates}, which is not always valid depending on the exponents $\mu$ and $q$. But, if 
$$
    \frac{p+2}{p+1-\mu}\leq\frac{p+2-q}{p+1-q},
$$
the optimal growth rate along the boundary of the 
non-coincidence set $\partial\{u>0\}$ is obtained. Moreover, we show that in the case $\frac{p+2-q}{p+1-q}<\frac{p+2}{p+1-\mu}$, non-degeneracy can fail, and therefore, the growth on the free boundary may not be optimal.

Using a barrier argument together with Theorem \ref{improvement_regularity}, we derive a strong maximum principle, stated in Corollary \ref{strong.maximum.principle}, for the ``critical equation'' obtained when $\mu\rightarrow p+1$, that is
$$
    |\nabla u|^pF(D^2u)+\mathfrak{a}(x)|\nabla u|^q=\lambda_0(x)u(x)^{p+1}_+\quad\text{in}\quad\Omega,
$$
under suitable additional assumptions on the coefficients and data of the equation. We also have, as a consequence of Theorems \ref{improvement_regularity} and \ref{ND_estimates}, the uniform density of the region of positivity
near the free boundary, and information about the Hasdorff measure of the free boundary $\partial\{u>0\}$.

\subsection{Main results and some consequences}

We now start discussing the statement of the new improved regularity estimates proven in this paper.  In our first result, we establish a sharp and improved regularity estimate for viscosity solutions to \eqref{equation} at free boundary points.

\begin{theorem}\label{improvement_regularity}
Let $u$ be a non-negative and bounded viscosity solution to \eqref{equation} and consider $x_0\in\partial\{u>0\}\cap\Omega'$, where $\Omega'\Subset\Omega$. Then, for $0<r<\min\left\{1,\frac{\operatorname{dist}(\Omega',\partial\Omega)}{2}\right\}$ and any $x\in B_r(x_0)\cap\{u>0\}$, there holds
\begin{equation}\label{growth}
u(x)\leq \mathrm{C}_0\cdot\max\left\{1,\|u\|_{L^{\infty}(\Omega)},\|\mathfrak{a}\|^{\frac{1}{1+p-q}}_{L^{\infty}(\Omega)},\|\lambda_0\|^{\frac{1}{p+1-\mu}}_{L^{\infty}(\Omega)}\right\}|x-x_0|^{\beta},
\end{equation}
where 
\begin{equation}\label{beta}
\beta=\min\left\{\frac{p+2-q}{p+1-q},\frac{p+2}{p+1-\mu}\right\},
\end{equation}
and $\mathrm{C}_0$ is a constant depending only on $n,\lambda,\Lambda,p,q,\mu,\|\mathfrak{a}\|_{L^{\infty}(\Omega)},\|\lambda_0\|_{L^{\infty}(\Omega)}$, and $\operatorname{dist}(\Omega',\partial\Omega)$.
\end{theorem}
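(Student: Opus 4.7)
The plan is to adopt the iterative flattening (``improvement of flatness'') scheme by contradiction and compactness, standard for dead-core problems with scale-invariant structure (cf.\ \cite{Leitao, ET}). By translation and a dilation I would first reduce to the normalized setting $x_0=0$, $B_1\subset\Omega$, $\|u\|_{L^\infty(B_1)}\leq 1$, and $\|\mathfrak{a}\|_{L^\infty}+\|\lambda_0\|_{L^\infty}\leq\delta$, for a small universal $\delta$ to be chosen; the explicit dependence on the norms in \eqref{growth} is then recovered by undoing this rescaling.

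The role of the exponent $\beta$ from \eqref{beta} becomes transparent under the natural rescaling $v(x):=u(\rho x)/\rho^{\beta}$: using the $1$-homogeneity of $F$ and the chain rule, $v$ satisfies
$$
|\nabla v|^p F(D^2 v)+\rho^{\theta_1}\,\mathfrak{a}(\rho x)\,|\nabla v|^q \;\lesssim\;\rho^{\theta_2}\,\lambda_0(\rho x)\,v_+^{\mu},
$$
with $\theta_1=(p+2-q)-(p+1-q)\beta$ and $\theta_2=(p+2)-(p+1-\mu)\beta$. The choice \eqref{beta} is exactly what guarantees $\theta_1,\theta_2\geq 0$ simultaneously (with equality saturated by the minimizer). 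Consequently both the Hamiltonian term and the absorption term are suppressed, never amplified, under dilation.

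The heart of the argument is a one-step flatness lemma, which I would establish by contradiction: there exist $\lambda\in(0,1/2)$ and $\delta>0$, universal, such that if $u$ is a non-negative viscosity solution of \eqref{equation} in $B_1$ with $u(0)=0$, $\|u\|_{L^\infty(B_1)}\leq 1$, and $\|\mathfrak{a}\|_{L^\infty}+\|\lambda_0\|_{L^\infty}\leq\delta$, then $\|u\|_{L^\infty(B_\lambda)}\leq\lambda^\beta$. If this failed, one extracts a sequence $u_k$ (with coefficients shrinking to zero) along which $\|u_k\|_{L^\infty(B_\lambda)}>\lambda^\beta$. Treating the Hamiltonian and absorption as a uniformly bounded source, the local $C^{1,\alpha_0}$ estimates of Imbert--Silvestre \cite{IS} (and their extensions to the Hamiltonian setting \cite{BirDem16}) yield equicontinuity; by stability of viscosity solutions, a subsequential locally uniform limit $u_\infty$ is a non-negative viscosity solution of the homogeneous equation $|\nabla u_\infty|^p F(D^2 u_\infty)=0$ in $B_{3/4}$ with $u_\infty(0)=0$. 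The strong maximum principle for this class of singular/degenerate operators \cite{BirDem06, BirDem07} then forces $u_\infty\equiv 0$ in $B_{3/4}$, which contradicts the lower bound $\|u_k\|_{L^\infty(B_\lambda)}>\lambda^\beta$ for large $k$.

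With the flatness lemma in hand, the conclusion follows by iteration: set $u_j(x):=u(\lambda^j x)/\lambda^{j\beta}$ and note that the non-negativity of $\theta_1,\theta_2$ preserves the smallness of the coefficients from step to step, so the flatness lemma applies to each $u_j$ in turn; induction yields $\|u\|_{L^\infty(B_{\lambda^j})}\leq\lambda^{j\beta}$ for every $j\geq 0$, and interpolation between consecutive dyadic scales produces the growth $u(x)\leq C|x-x_0|^\beta$; undoing the initial normalization reinstates the constants $\|u\|_\infty$, $\|\mathfrak{a}\|_\infty^{1/(1+p-q)}$, and $\|\lambda_0\|_\infty^{1/(p+1-\mu)}$ in \eqref{growth}. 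The main obstacle I anticipate is the compactness step: passing to the limit for viscosity solutions of a singular/degenerate operator in the presence of a sublinear Hamiltonian requires care and relies on the stability theory of \cite{BirDem16, IS}. The non-negativity of $\theta_1$ and $\theta_2$ — exactly the condition that pins down the optimal $\beta$ in \eqref{beta} — is what allows both lower-order terms to drop out in the limit, making the strong maximum principle on the homogeneous equation available.
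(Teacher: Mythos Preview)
Your proposal is correct and follows essentially the same iterative flatness scheme as the paper: normalize so that the coefficients are small, prove a one-step decay lemma by compactness and contradiction, then iterate using precisely the scaling invariance encoded in the choice of $\beta$ (your $\theta_1,\theta_2\geq 0$ is exactly the paper's observation that $\beta(p+1-q)-(p+2-q)\leq 0$ and $\beta(p+1-\mu)-(p+2)\leq 0$).

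The one noteworthy technical difference lies inside the flatness lemma. You pass to the limit in the \emph{degenerate} equation $|\nabla u_\infty|^p F(D^2u_\infty)=0$ and then invoke the strong maximum principle of Birindelli--Demengel to force $u_\infty\equiv 0$. The paper instead carries out a more delicate test-function argument (handling separately the cases $\nabla\varphi(x_0)\neq 0$ and $\nabla\varphi(x_0)=0$, the latter via projections onto the positive eigenspace of $D^2\varphi$) to show that $u_\infty$ in fact solves the \emph{non-degenerate} equation $F_\infty(D^2u_\infty)=0$, after which the classical Harnack inequality from \cite{Caffa-Cabre} yields $u_\infty\equiv 0$. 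Your route is arguably more economical, since stability for the degenerate operator is direct; the paper's route buys a stronger limiting statement at the cost of a longer argument. For compactness, the paper relies only on its own Lipschitz estimate (Lemma~\ref{lipschitzcontinuity}), which covers the full range $p>-1$, rather than the $C^{1,\alpha}$ theory of \cite{IS,BirDem16}; either suffices.
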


A geometric interpretation of Theorem \eqref{improvement_regularity} states that if $u$ solves \eqref{equation} and $x_0\in \partial\{u>0\}$, then for $x\in B_r(x_0)\cap \{u>0\}$, we have
$$
\sup_{B_r(x_0)}u(x)\leq \mathrm{C}r^\beta.
$$

From a geometric perspective, establishing a sharp lower bound estimate for the solutions is crucial, as it provides important qualitative information. This property is referred to as Non-degeneracy.

\begin{theorem}[Non-degeneracy estimate]\label{ND_estimates}
Let $u$ be a non-negative and bounded viscosity solution to \eqref{equation} in $B_1$. Then, there exists a universal constant $\mathrm{C}_1>0$ such that
\begin{equation}\label{Non-degeneracy}
\sup_{B_r(x_0)} u(x)\geq \mathrm{C}_1r^{\frac{p+2}{p+1-\mu}},
\end{equation}
whenever $\frac{p+2}{p+1-\mu}\leq\frac{p+2-q}{p+1-q}$, for any $x_0\in \overline{\{u>0\}}\cap B_{1/2}$ and $0<r<1/2$.
\end{theorem}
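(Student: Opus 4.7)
The plan is to combine a radial barrier construction at the critical growth rate with a comparison principle argument, in the spirit of the Alt--Phillips non-degeneracy adapted to the singular/degenerate fully nonlinear setting with a Hamiltonian perturbation. Write $\mathcal{L}v := |\nabla v|^p F(D^2 v)+\mathfrak{a}(x)|\nabla v|^q$. By continuity of $u$, it suffices to prove \eqref{Non-degeneracy} for $x_0\in\{u>0\}\cap B_{1/2}$ and then let $x_0$ approach the free boundary; after translation assume $x_0=0$ with $u(0)>0$. Define the radial barrier
$$\Phi(x) := \kappa|x|^{\beta}, \qquad \beta = \frac{p+2}{p+1-\mu},$$
where $\kappa>0$ is a small parameter to be chosen below. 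Since $\mu\geq 0$ and $p>-1$ give $\beta>1$, $D^2\Phi$ is positive definite on $\mathbb{R}^n\setminus\{0\}$, with $\operatorname{tr}D^2\Phi(x)=\kappa\beta(\beta+n-2)|x|^{\beta-2}$.

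The heart of the argument is showing that $\Phi$ is a classical supersolution of the lower-bound equation $\mathcal{L}v = c_0\lambda_0(x)v^{\mu}$ on $B_r\setminus\{0\}$ for $r\leq 1$, where $c_0>0$ is the structural constant coming from the lower control $f(x,u)\geq c_0\lambda_0 u^{\mu}$ that underlies ``$f\lesssim \lambda_0 u_+^{\mu}$''. Using the ellipticity bound $F(D^2\Phi)\leq \Lambda\operatorname{tr}(D^2\Phi)$ and the identity $(p+1)\beta-(p+2)=\mu\beta$, one computes
$$\mathcal{L}\Phi(x)\leq \Lambda(\kappa\beta)^{p+1}(\beta+n-2)|x|^{\mu\beta}+\|\mathfrak{a}\|_{\infty}(\kappa\beta)^{q}|x|^{q(\beta-1)}.$$
A short algebraic manipulation shows that the standing hypothesis $\beta\leq \frac{p+2-q}{p+1-q}$ is equivalent to $q(\beta-1)\geq \mu\beta$ (and in particular forces $q\geq \mu$), so $|x|^{q(\beta-1)}\leq |x|^{\mu\beta}$ for $|x|\leq 1$. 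Combined with $p+1>\mu$, choosing $\kappa$ small enough, depending only on $n,p,q,\mu,\lambda,\Lambda,\|\mathfrak{a}\|_{\infty},c_0,\inf\lambda_0$, yields
$\mathcal{L}\Phi(x)\leq c_0\lambda_0(x)\Phi(x)^{\mu}$ on $B_r\setminus\{0\}$.

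Suppose, for contradiction, that $\sup_{B_r(0)}u<\kappa r^{\beta}$. Then on $\partial B_r$ we have $u<\kappa r^{\beta}=\Phi$. Since $u$ is a viscosity subsolution of $\mathcal{L}v=c_0\lambda_0 v^{\mu}$, the comparison principle for this class of singular/degenerate fully nonlinear operators (cf.\ the Birindelli--Demengel framework recalled in the introduction) gives $u\leq \Phi$ on $\overline{B_r}$, which contradicts $u(0)>0=\Phi(0)$. Hence $\sup_{B_r(0)}u\geq \kappa r^{\beta}$, and the conclusion follows with $\mathrm{C}_1:=\kappa$.

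The main technical obstacle is applying the comparison principle cleanly, because the operator $\mathcal{L}$ degenerates or becomes singular precisely at the critical point of $\Phi$ (where $\nabla\Phi=0$), and for $\beta<2$ the barrier fails to be $C^2$ at $0$; the standard workaround is to regularize as $\Phi_{\varepsilon}(x)=\kappa(|x|^2+\varepsilon^2)^{\beta/2}$, verify the supersolution property for $\varepsilon>0$, apply viscosity comparison, and let $\varepsilon\to 0$. A secondary conceptual point, which clarifies the sharpness of the hypothesis, is that $\beta\leq \frac{p+2-q}{p+1-q}$ is exactly the condition that lets the Hamiltonian term be absorbed at the natural $r^{\beta}$ rescaling of the equation; when it fails, that term becomes supercritical at the relevant scale, explaining why, as the authors point out, non-degeneracy at this rate can genuinely break down.
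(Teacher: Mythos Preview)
Your argument is correct and mirrors the paper's proof: the same radial barrier $\kappa|x|^{\beta}$ with $\beta=\frac{p+2}{p+1-\mu}$, the same exponent bookkeeping (the identity $(p+1)\beta-(p+2)=\mu\beta$ and the inequality $q(\beta-1)\geq\mu\beta$, which is exactly the paper's \eqref{NDbeta1}--\eqref{NDbeta2}), and the same comparison--contradiction at the origin; the only cosmetic difference is that the paper first rescales to $u_r(x)=u(rx)/r^{\beta}$ and works on $B_1$, whereas you work directly on $B_r$. Two small corrections: the lower bound $f(x,u)\geq c_0\lambda_0 u^{\mu}$ that makes $u$ a subsolution does \emph{not} follow from ``$f\lesssim\lambda_0 u_+^{\mu}$'' (that is an upper bound) --- the paper's proof simply takes the prototype equality $f=\lambda_0 u_+^{\mu}$ and invokes its own Lemma~\ref{ComparisonPrin}(ii) rather than a generic Birindelli--Demengel result --- and your $\Phi_{\varepsilon}$ regularization is unnecessary, since at the origin no $\mathrm{C}^2$ test function with nonzero gradient can touch $\Phi$ from below, so the viscosity supersolution condition there is vacuous.
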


\begin{remark}
In the setting of this manuscript, non-degeneracy does not generally hold when \( \beta \) is given by Theorem \ref{improvement_regularity}. In Section \ref{ND_Section}, we provide an example that is a solution to problem \eqref{equation}, but non-degeneracy fails when \( \beta = \frac{p+2-q}{p+1-q}\).
\end{remark}

By standard argument, see \cite{DJ,Leitao,ET}, the combination of Theorems \ref{improvement_regularity} and \ref{ND_estimates} gives us a positive density result for the non-coincidence set, which, in turn, makes it impossible to form cusps on the free boundary pointing inward the coincidence set. In addition, an estimate of the Hausdorff dimension of the boundary of the coincidence set, ${\partial\{u>0\}}$, is also obtained. 

\begin{corollary}\label{Positivity}
    Let $u$ be a bounded, non-negative viscosity solution to \eqref{equation} in $\Omega$. Then, for  $x_0\in\partial\{u>0\}\cap\Omega'$ and any $0<r\ll 1$,
    $$
    \mathcal{L}^n(B_r(x_0)\cap\{u>0\})\geq \sigma r^n,
    $$
    for a universal constant $\sigma$ depending on $n,\lambda,\Lambda,p,q,\mu,\|\mathfrak{a}\|_{L^{\infty}(\Omega)},\|\lambda_0\|_{L^{\infty}(\Omega)}$, and $\operatorname{dist}(\Omega',\partial\Omega)$. Moreover, there exists a universal constant $\kappa>0$, such that
    $$
        \mathcal{H}^{n-\kappa}(\partial\{u>0\}\cap\Omega')<\infty.
    $$
    In other words, the Hasdorff dimension is at most $n-\kappa$.
\end{corollary}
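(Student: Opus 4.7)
The plan is to derive both assertions from a porosity-type property of the free boundary $\partial\{u>0\}$, obtained by combining the upper growth bound in Theorem \ref{improvement_regularity} with the matching lower bound in Theorem \ref{ND_estimates}. Note that Theorem \ref{ND_estimates} is only available in the regime $\tfrac{p+2}{p+1-\mu}\leq\tfrac{p+2-q}{p+1-q}$, which forces the exponent of Theorem \ref{improvement_regularity} to reduce to $\beta=\tfrac{p+2}{p+1-\mu}$. Thus both estimates scale with the \emph{same} power $r^{\beta}$, and this matching of exponents is what drives the entire argument.

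First I would establish the density lower bound. Fix $x_0\in\partial\{u>0\}\cap\Omega'$ and $0<r\ll 1$. By Theorem \ref{ND_estimates} applied at scale $r/2$, there exists $y_r\in\overline{B_{r/2}(x_0)}$ with $u(y_r)\geq C_1(r/2)^{\beta}$. Let $z_r\in\partial\{u>0\}$ be a point realizing $d\defeq\operatorname{dist}(y_r,\partial\{u>0\})=|y_r-z_r|$; since $x_0\in\partial\{u>0\}$, we have $d\leq r/2$, and $B_d(y_r)\subset\{u>0\}$ by definition of $d$. Passing to an intermediate subdomain $\Omega'\Subset\Omega''\Subset\Omega$, the point $z_r$ lies in $\Omega''$ for $r$ small, so Theorem \ref{improvement_regularity} yields $u(y_r)\leq C_0 d^{\beta}$. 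Comparing these two inequalities gives $d\geq c\,r$ for a universal $c=c(C_0,C_1,\beta)>0$. Shrinking $c$ if necessary, we arrange $B_{cr}(y_r)\subset B_r(x_0)\cap\{u>0\}$, which yields $\mathcal{L}^n(B_r(x_0)\cap\{u>0\})\geq \omega_n c^n r^n$ and hence the asserted positive density with $\sigma=\omega_n c^n$.

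The second assertion follows by recognizing the construction above as porosity of the free boundary. Indeed, for every $x_0\in\partial\{u>0\}\cap\Omega'$ and every small $r>0$ we have exhibited a ball $B_{cr}(y_r)\subset B_r(x_0)$ lying entirely inside the open set $\{u>0\}$, and hence disjoint from $\partial\{u>0\}$. By definition, this means $\partial\{u>0\}\cap\Omega'$ is $c$-porous in $\mathbb{R}^n$. Invoking the classical Hausdorff-measure estimate for porous sets (due to Martio--Vuorinen / Salli), there exists $\kappa=\kappa(n,c)>0$ such that $\mathcal{H}^{n-\kappa}(\partial\{u>0\}\cap\Omega')<\infty$, which finishes the proof and, in particular, shows that the Hausdorff dimension of $\partial\{u>0\}\cap\Omega'$ is at most $n-\kappa$.

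I expect the only real technical obstacle to be the localization step: ensuring simultaneously that the non-degeneracy point $y_r$ lies well inside $B_r(x_0)$ and that its projection $z_r$ onto the free boundary lies in a region where Theorem \ref{improvement_regularity} can be applied with uniform constants. This is handled in a standard way by applying Theorem \ref{ND_estimates} at scale $r/2$ and by working in a slightly enlarged subdomain $\Omega''$; after that, the porosity argument and the Hausdorff-measure bound are invoked as black boxes and no new ideas are required.
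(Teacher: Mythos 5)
Your proof is correct and is precisely the standard argument the paper refers to: it does not present its own proof of Corollary \ref{Positivity}, but instead points to \cite{DJ,Leitao,ET} for the ``standard argument'' combining the matching upper and lower growth rates at free boundary points. Your execution is faithful to that standard technique: you correctly identify that the hypothesis $\frac{p+2}{p+1-\mu}\leq\frac{p+2-q}{p+1-q}$ (which the paper imposes in the remark following the corollary precisely to invoke Theorem \ref{ND_estimates}) forces the exponent in Theorem \ref{improvement_regularity} to equal $\frac{p+2}{p+1-\mu}$, so the two estimates match; you then use non-degeneracy to locate a point $y_r$ of size $\gtrsim r^\beta$, regularity to bound $u(y_r)\lesssim d^\beta$ with $d=\operatorname{dist}(y_r,\partial\{u>0\})$, and deduce $d\gtrsim r$, giving both the density lower bound and porosity, with the Hausdorff-measure estimate invoked from the classical theory of porous sets. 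The localization to an intermediate compact subdomain $\Omega''$ to keep Theorem \ref{improvement_regularity}'s constants uniform when applied at $z_r$ is correctly flagged and handled.
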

\begin{remark}
    In order to use the Theorem \ref{ND_estimates}, we are suppose in the Corollary \ref{Positivity} that $\frac{p+2}{p+1-\mu}\leq\frac{p+2-q}{p+1-q}$.
\end{remark}
\vspace{0.2cm}
The paper is organized as follows, in Section \ref{HD}  we define the notion of viscosity solution and some more definitions related to the operator $F$. In Section \ref{EUV} we provide the Comparison Principle  that makes it possible to prove existence, as well as to obtain Non-degeneracy and positivity of solutions through a barrier argument. In Section \ref{LLE}, we obtain the Lipschitz estimate, which provides compactness to prove the Flatness Lemma in Section \ref{IRE}, and therefore growth on the free boundary (Theorem \ref{improvement_regularity}) is also  obtained in Section \ref{IRE}. Also in Section \ref{IRE}, we provide the positivity result (Corollary \ref{strong.maximum.principle}). Finally, Section \ref{ND_Section} is devoted to the proof of Theorem \ref{ND_estimates}. 
\section{Hypothesis and definitions}\label{HD}

In this section, we introduce the main structural hypothesis of the problem, along with the relevant definitions regarding function spaces and the notion of solution that will be used throughout this work.

Throughout this work we will suppose the following structural hypothesis:
\begin{assumption}
[\textbf{(Uniform ellipticity)}.] \label{unif ellipticity} There exist constants $\Lambda\geq\lambda>0$ such that 
\begin{equation}
\mathcal{P}^-_{\lambda,\Lambda}(N) \leq F(M+N) - F(M) \leq \mathcal{P}^+_{\lambda,\Lambda}(N).
\end{equation}
for all $M, N \in \text{Sym}(n)$ with $N \geq 0$. Here, $\operatorname{Sym}(n)$ denotes the space of symmetric matrices and $\mathcal{P}^{\pm}$ denotes the extremal Pucci operators
\begin{align*}
\mathcal{P}^+_{\lambda,\Lambda}(M) \coloneqq \Lambda\sum_{e_i>0} e_i + \lambda \sum_{e_i<0} e_i \quad\text{and}\quad
\mathcal{P}^-_{\lambda,\Lambda}(M) \coloneqq \lambda\sum_{e_i>0} e_i + \Lambda \sum_{e_i<0} e_i,
\end{align*}
where $(e_i)_{i=1}^n$ denotes are the eigenvalues of the matrix $M \in \mbox{Sym}(n)$.  We can express this as follows:
$$
\mathcal{P}^+_{\lambda,\Lambda}(M)=\sup_{A\in\mathcal{S}_{\lambda,\Lambda}} \T(AM)\quad\text{and}\quad \mathcal{P}^-_{\lambda,\Lambda}(M)=\inf_{A\in\mathcal{S}_{\lambda,\Lambda}} \T(AM),
$$
for $\mathcal{S}_{\lambda,\Lambda}\coloneqq\{A\in \mbox{Sym}(n): \lambda I\leq A\leq \Lambda I\}$.
\end{assumption}

We now introduce the notion of viscosity solution associated with our operators.

\begin{definition}
We say that $u \in \mathrm{C}^0(\Omega)$ is a viscosity super-solution (resp. sub-solution) to
$$
\mathcal{G}(x,\nabla u, D^2u)=f(x,u)\quad\text{in}\quad \Omega
$$
if for all $x_0\in \Omega$ we have the following 
\begin{enumerate}
\item[$1.$] either for all $\phi\in \mathrm{C}^2(\Omega)$ such that $u-\phi$ attains a local minimum at $x_0$ and $|\nabla\varphi(x_0)|\neq 0$, one has
$$
\mathcal{G}(x_0,\nabla \phi(x_0),D^2\phi(x_0))\leq f(x_0,\phi(x_0))\quad (\text{resp.} \,\, \geq f(x_0,\phi(x_0)))
$$
\item[$2.$] or there exists an open ball $B_\iota(x_0)\subset \Omega$, for $\iota>0$ where $u$ is constant, $u=K$ and there holds
$$
f(x_0,K)\geq 0 \quad (\text{resp.} \,\, f(x_0,K)\leq 0).
$$
\end{enumerate}
Finally, we say that $u$ is a viscosity solution if it is viscosity super- and sub-solution simultaneously.
\end{definition}

We now define the super/sub-jets of a function $u$ at
the point $x$, as introduced by Crandall, Ishii, and Lions in \cite{CIL}.

\begin{definition}
A second-order superjet of \( u \) at \( x_0 \in \Omega \) is defined as
\[
\mathcal{J}^{2,+}_{\Omega} u(x_0) = \left\{ (\nabla \phi(x_0), D^2 \phi(x_0)) : \phi \in \mathrm{C}^2 \text{ and } u - \phi \text{ attains a local maximum at } x_0 \right\}.
\]
The closure of a superjet is given by
\begin{eqnarray*}
\overline{\mathcal{J}}^{2,+}_{\Omega} u(x_0) = \left\{ (\vec{\xi}, \mathrm{X}) \in \mathbb{R}^n \times \text{Sym}(n): \exists\,\, (\vec{\xi}_k, \mathrm{X}_k) \in \mathcal{J}^{2,+}_{\Omega} u(x_k) \text{ such that}\right.\\ 
\left.(x_k, u(x_k), \vec{\xi}_k, \mathrm{X}_k) \to (x_0, u(x_0), \vec{\xi}, \mathrm{X}) \right\}.
\end{eqnarray*}
The second-order subjet of $u$ at $x_0$ is defined by $\mathcal{J}^{2,-}_{\Omega}u(x_0)\coloneqq-\mathcal{J}^{2,+}_{\Omega}(-u)(x_0)$. Then, we can define $\overline{\mathcal{J}}^{2,-}_{\Omega}u(x_0)$.
\end{definition}

\section{Existence/Uniqueness of viscosity solutions}\label{EUV}

Given a non-negative boundary data \( g \in \mathrm{C}^0(\partial\Omega) \) and a continuous function \( h \in \mathrm{C}^0(\overline{\Omega}) \), we comment on the existence and uniqueness of solutions to the problem \eqref{equation}. The comparison principle we established below for the Dirichlet problem
\begin{equation}\label{Dirichlet_Problem}
\left\{
\begin{array}{rllll}
|\nabla u|^pF(D^2u)+\mathfrak{a}(x)|\nabla u|^q-\lambda_0(x)u^{\mu}_+ &= & h(x) &\mbox{in} & \Omega,\\[0.2cm]
u &=& g &\mbox{on} & \partial\Omega,
\end{array}
\right.
\end{equation}
plays a fundamental role in establishing the existence of our problem, but also in establishing geometric properties that will be discussed later. Our approach is inspired by \cite{DJ, BV} with an extra step to deal with the Hamiltonian
term.

\begin{lemma}[Comparison Principle]\label{ComparisonPrin}
Let $u,v\in \mathrm{C}^0(\overline{\Omega})$ and let $h_1,h_2\in C^0(\overline{\Omega})$ satisfying
\begin{equation}\label{CP}
\begin{array}{rllll}
|\nabla u|^pF(D^2u)+\mathfrak{a}(x)|\nabla u|^q-\lambda_0(x)u^{\mu}_+ &\geq & h_1(x) &\mbox{in} & \Omega,\\[0.2cm]
|\nabla v|^pF(D^2v)+\mathfrak{a}(x)|\nabla v|^q-\lambda_0(x)v^{\mu}_+ &\leq & h_2(x) &\mbox{in} & \Omega.
\end{array}
\end{equation}
in the viscosity sense. Furthemore, suppose that either $u_1$ or $u_2$ is locally Lipschitz continuous in $\Omega$, and also that one of the following holds
\begin{itemize}
    \item[(i)] $h_1>h_2$ in $\overline{\Omega}$ and $\inf_{\Omega}\lambda_0\geq 0$;
    \item[(ii)] $h_1\geq h_2$ in $\overline{\Omega}$ and $\inf_{\Omega}\lambda_0>0$.
\end{itemize}
If $v\geq u$ on $\partial\Omega$, then $v\geq u$ in $\Omega$. 
\end{lemma}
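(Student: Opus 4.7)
The plan is a contradiction argument based on the doubling-of-variables technique, adapted to singular/degenerate operators in the spirit of Crandall--Ishii--Lions and the refinements of Birindelli--Demengel for $|\nabla u|^pF(D^2u)$-type equations. Suppose for contradiction that $M := \max_{\overline\Omega}(u-v) > 0$. Since $v \geq u$ on $\partial\Omega$, the maximum is attained at some interior point $\bar x \in \Omega$. Assume without loss of generality that $v$ is the one that is locally Lipschitz near $\bar x$, with constant $L > 0$.

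For $\varepsilon > 0$ I would consider the penalization
$$\Phi_\varepsilon(x,y) := u(x) - v(y) - \frac{1}{2\varepsilon}|x-y|^2,$$
and let $(x_\varepsilon, y_\varepsilon)$ be a maximum point in $\overline\Omega \times \overline\Omega$. Standard arguments give $(x_\varepsilon, y_\varepsilon) \to (\bar x, \bar x)$ and $|x_\varepsilon - y_\varepsilon|^2/\varepsilon \to 0$. Crucially, comparing $\Phi_\varepsilon(x_\varepsilon, y_\varepsilon) \geq \Phi_\varepsilon(x_\varepsilon, x_\varepsilon)$ and invoking the local Lipschitz bound on $v$ yields $|x_\varepsilon - y_\varepsilon| \leq 2L\varepsilon$, so that the normalized vector $\xi_\varepsilon := \varepsilon^{-1}(x_\varepsilon - y_\varepsilon)$ stays bounded uniformly in $\varepsilon$. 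Ishii's lemma then provides matrices $X_\varepsilon, Y_\varepsilon \in \operatorname{Sym}(n)$ with $(\xi_\varepsilon, X_\varepsilon) \in \overline{\mathcal{J}}^{2,+}_\Omega u(x_\varepsilon)$ and $(\xi_\varepsilon, Y_\varepsilon) \in \overline{\mathcal{J}}^{2,-}_\Omega v(y_\varepsilon)$, satisfying the usual matrix inequality which in particular implies $X_\varepsilon \leq Y_\varepsilon$.

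In the non-degenerate case $\xi_\varepsilon \neq 0$, one may feed these jets into the viscosity inequalities for $u$ (sub-solution) and $v$ (super-solution) and subtract, obtaining
$$|\xi_\varepsilon|^p \bigl(F(X_\varepsilon) - F(Y_\varepsilon)\bigr) + \bigl(\mathfrak{a}(x_\varepsilon) - \mathfrak{a}(y_\varepsilon)\bigr)|\xi_\varepsilon|^q \geq h_1(x_\varepsilon) - h_2(y_\varepsilon) + \lambda_0(x_\varepsilon) u(x_\varepsilon)_+^\mu - \lambda_0(y_\varepsilon) v(y_\varepsilon)_+^\mu.$$
The first term on the left is nonpositive by uniform ellipticity applied to $X_\varepsilon - Y_\varepsilon \leq 0$; the second vanishes as $\varepsilon \to 0$ by continuity of $\mathfrak{a}$ and the boundedness of $|\xi_\varepsilon|$. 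Passing to the limit, the right-hand side is strictly positive: either from the strict separation $h_1 > h_2$ in case (i), or in case (ii) from the combination $\inf\lambda_0 > 0$ with strict monotonicity of $t \mapsto t_+^\mu$ applied to $u(\bar x) > v(\bar x)$. Either way, this yields the desired contradiction.

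The principal obstacle is the degenerate sub-case $\xi_\varepsilon = 0$, in which the viscosity inequalities cannot be used directly because Definition 2.2 restricts admissible test functions to those with $|\nabla \phi(x_0)| \neq 0$. To handle this, I would perturb the penalization to $\Phi_\varepsilon(x,y) - \delta\langle e, x - y\rangle$ for a fixed unit vector $e$ and a small parameter $\delta > 0$; the new maximizers have matching gradients $\varepsilon^{-1}(x_\varepsilon^{(\delta)} - y_\varepsilon^{(\delta)}) + \delta e$, which are nonzero, and one runs the previous argument and then lets $\delta \to 0$. When instead $u$ (or $v$) is locally constant around the contact point, item (2) of Definition 2.2 reduces the inequalities to an algebraic statement on $h_1, h_2, \lambda_0$, giving the same contradiction. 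The technical subtlety I expect to be most delicate is verifying that the Hamiltonian contribution $|\xi_\varepsilon|^q$ remains controlled throughout the perturbation limit; this is precisely where the structural hypothesis $q < p+1$, combined with the boundedness of $\xi_\varepsilon$ granted by the Lipschitz assumption on one of the functions, plays its decisive role.
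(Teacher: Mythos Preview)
Your proposal is correct and follows essentially the same doubling-of-variables argument as the paper: both use the Lipschitz hypothesis on one of the two functions to bound $|\xi_\varepsilon| = \varepsilon^{-1}|x_\varepsilon - y_\varepsilon|$, apply Ishii's lemma to obtain ordered matrices, subtract the two viscosity inequalities, and pass to the limit using continuity of $\mathfrak{a}$ and $\lambda_0$. Two minor remarks: the paper's proof does not explicitly treat the degenerate case $\xi_\varepsilon = 0$ (so your tilt perturbation is in fact more careful than the original), and the structural hypothesis $q < p+1$ plays no role in this lemma---the Hamiltonian difference $(\mathfrak{a}(x_\varepsilon)-\mathfrak{a}(y_\varepsilon))|\xi_\varepsilon|^q$ vanishes in the limit purely from the continuity of $\mathfrak{a}$ and the uniform boundedness of $|\xi_\varepsilon|$ that the Lipschitz assumption delivers.
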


\begin{proof}
Let us suppose, for the purpose of contradiction, that exists $M_0>0$ such that
$$
M_0\coloneqq\sup_{\overline{\Omega}}(u-v).
$$
Now, for each $\varepsilon>0$, define
\begin{equation}\label{maximum}
M_\varepsilon = \sup_{\Bar{\Omega}\times \Bar{\Omega}} \left(u(x)-v(y)-\frac{1}{2\varepsilon}|x-y|^2\right)<\infty.
\end{equation}
Let $(x_\varepsilon,y_\varepsilon) \in \Bar{\Omega}\times \Bar{\Omega}$ be where the supremum $M_\varepsilon$ is attained. It follows as in \cite[Lemma 3.1]{CIL}, there holds
\begin{equation}\label{CP1}
\lim_{\varepsilon\rightarrow 0}\frac{1}{\varepsilon}|x_\varepsilon-y_\varepsilon|^2=0\quad\mbox{and}\quad \lim_{\varepsilon\rightarrow 0} M_\varepsilon = M_0. 
\end{equation}
In particular,
\begin{equation}\label{CP2}
z_0\coloneqq\lim_{\varepsilon\rightarrow 0} x_\varepsilon=\lim_{\varepsilon\rightarrow 0} y_\varepsilon,
\end{equation}
with $u(z_0)-v(z_0)=M_0$. Since,
$$
M_0>0\geq \sup_{\partial\Omega}(u-v),
$$
it implies $x_\varepsilon,y_\varepsilon\in \Omega'$ for some interior compact subdomain $\Omega'\Subset\Omega$ and $0<\varepsilon\ll1$. Therefore, by the Ishii-Lions Lemma, see \cite[Theorem 3.2]{CIL}, there exist $\mathcal{M},\mathcal{N}\in \mbox{Sym}(n)$ such that
\begin{equation}\label{CP3}
\left(\frac{x_\varepsilon-y_\varepsilon}{\varepsilon},\mathcal{M}\right)\in \overline{J}^{2,+}_{\Omega'} u(x_\varepsilon)\quad\mbox{and}\quad \left(\frac{y_\varepsilon-x_\varepsilon}{\varepsilon},\mathcal{N}\right)\in \overline{J}^{2,-}_{\Omega'} v(y_\varepsilon)
\end{equation}
and
\begin{equation}\label{CP4}
-\frac{3}{\varepsilon} \left(
\begin{array}{cc}
I & 0 \\
0 & I
\end{array}
\right)\leq \left(
\begin{array}{cc}
\mathcal{M} & 0 \\
0 & -\mathcal{N}
\end{array}
\right)\leq \frac{3}{\varepsilon} \left(
\begin{array}{cc}
I & -I \\
-I & I
\end{array}
\right).
\end{equation}
In particular, we obtain that $\mathcal{M}\leq\mathcal{N}$. 

Now, we claim that there exists a constant $\mathcal{L}$ such that
\begin{equation}\label{ComPrinClaim}
    \frac{|x_{\varepsilon}-y_{\varepsilon}|}{\varepsilon}\leq C,
\end{equation}
for any $\varepsilon\ll 1$.  Indeed, since that $u$ or $v$ is locally Lipschitz in $\Omega'$, there exist a positive constant $\mathcal{K}$ such that
\begin{equation}\label{sup inequality}
|u(x)-v(y)|\leq\sup_{\Omega'}(u_2-u_1)+\mathcal{K}|x-y|,\quad\text{for all}\,\, x,y\in\Omega'.
\end{equation}
Define
$$
\psi_{\varepsilon}(x,y)\coloneqq u(x)-v(y)-\frac{1}{2\varepsilon}|x-y|^2,
$$
and note that
$$
\sup_{\Omega'}(u-v)\leq\sup_{\Omega'\times\Omega'}\psi_{\varepsilon}(x,y)= \psi_{\varepsilon}(x_{\varepsilon},y_{\varepsilon}).
$$
Then, using \eqref{sup inequality}, we obtain
\begin{align*}
\sup_{\Omega'}(u-v)+\frac{1}{2\varepsilon}|x_{\varepsilon}-y_{\varepsilon}|^{2}&\leq \psi_{\varepsilon}(x_{\varepsilon},y_{\varepsilon})+\frac{1}{2\varepsilon}|x_{\varepsilon}-y_{\varepsilon}|^2\\
&\leq \sup_{\Omega'}(u-v)+\mathcal{K}|x_{\varepsilon}-y_{\varepsilon}|.
\end{align*}
This concludes the proof of inequality \eqref{ComPrinClaim}.

Finally, we will conclude the proof of the lemma. By \eqref{CP} and \eqref{CP3}, we obtain
$$
h_1(x_\varepsilon)+\lambda_0(x_\varepsilon)u(x_\varepsilon)_+^\mu \leq \left|\frac{x_\varepsilon-y_\varepsilon}{\varepsilon}\right|^pF(\mathcal{M})+\mathfrak{a}(x_\varepsilon)\left|\frac{x_\varepsilon-y_\varepsilon}{\varepsilon}\right|^q 
$$
and
$$
h_2(y_\varepsilon)+\lambda_0(y_\varepsilon)v(y_\varepsilon)_+^\mu  \geq \left|\frac{x_\varepsilon-y_\varepsilon}{\varepsilon}\right|^pF(\mathcal{N})+\mathfrak{a}(y_\varepsilon)\left|\frac{x_\varepsilon-y_\varepsilon}{\varepsilon}\right|^q.
$$
Consequently, using \eqref{CP4} and \eqref{ComPrinClaim}, there holds
\begin{align*}
h_1(x_\varepsilon)-h_2(y_\varepsilon) & \leq\left|\frac{x_\varepsilon-y_\varepsilon}{\varepsilon}\right|^p\left(F(\mathcal{M})-F(\mathcal{N})\right)+ \left|\frac{x_\varepsilon-y_\varepsilon}{\varepsilon}\right|^q(\mathfrak{a}(x_\varepsilon)-\mathfrak{a}(y_\varepsilon))\\[0.2cm]
& \,\,\,\,\,\,\,\,\,\,-\lambda_0(x_\varepsilon)u(x_\varepsilon)_+^\mu+\lambda_0(y_\varepsilon)v(y_\varepsilon)_+^\mu \\[0.2cm]
&\leq  \mathcal{K}^q(\mathfrak{a}(x_\varepsilon)-\mathfrak{a}(y_\varepsilon))-\lambda_0(x_\varepsilon)u(x_\varepsilon)_+^\mu+\lambda_0(y_\varepsilon)v(y_\varepsilon)_+^\mu.
\end{align*}
Therefore, using \eqref{maximum}, we have
\begin{align*}
    h_1(x_\varepsilon)-h_2(y_\varepsilon)\leq  \mathcal{K}^q&(\mathfrak{a}(x_\varepsilon)-\mathfrak{a}(y_\varepsilon))+\lambda_0(y_\varepsilon)v(y_\varepsilon)_+^\mu\\
    & -\left(M_\varepsilon+v(y_\varepsilon)+\frac{1}{2\varepsilon}|x_\varepsilon-y_\varepsilon|^2\right)_+^\mu\lambda_0(x_\varepsilon)
\end{align*}
Finally, by continuity of $\lambda_0$ and $\mathfrak{a}$ together with \eqref{CP1} and \eqref{CP2}, and letting $\varepsilon\rightarrow 0$ in the estimate above, it gives
\begin{align*}
    h_1(z_0)-h_2(z_0)&\leq\lambda_0(z_0)(v(z_0)^{\mu}_+-[M_0+v(z_0)]_+^\mu)\\
    &\leq\inf_{\Omega}\lambda_0(v(z_0)^{\mu}_+-[M_0+v(z_0)]_+^\mu).
\end{align*}
Which contradicts assumptions (i) and (ii), since $v\geq 0$ and $M_0>0$.
\end{proof}

We briefly comment on the existence of a viscosity solution to the Dirichlet problem \eqref{equation}. As is customary, it follows from an application of Perron's method, provided that the comparison principle holds. Let us consider $\overline{u}$ and $\underline{u}$ solutions to the following boundary value problems
$$
\left\{
\begin{array}{rccccc}
  |\nabla \overline{u}|^pF(D^2\overline{u})+\mathfrak{a}(x)|\nabla \overline{u}|^q   & = & 0 &\text{in} & \Omega, \\
    \overline{u} & =& g & \text{on}&\partial\Omega,
\end{array}
\right.
$$
and
$$
\left\{
\begin{array}{rclccc}
  |\nabla \underline{u}|^pF(D^2\underline{u})+\mathfrak{a}(x)|\nabla \underline{u}|^q   & = & \|g\|^{\mu}_{L^\infty(\partial\Omega)} &\text{in} & \Omega, \\
    \underline{u} & =& g & \text{on}&\partial\Omega.
\end{array}
\right.
$$
The existence of such solutions follows standard arguments. Moreover, note that $\overline{u}$ and $\underline{u}$ are respectively, supersolution and subsolution to \eqref{equation}. Therefore we can apply the Comparison principle, Lemma \ref{ComparisonPrin}  above, it is possible, under a direct application of Perron’s method, to obtain the existence of a viscosity solution in $\mathrm{C}^0(\Omega)$ of \eqref{Dirichlet_Problem}. More precisely we have the following theorem
\begin{theorem}[Existence and uniquiness]
Let \( g \in \mathrm{C}^0(\partial\Omega) \) be a non-negative boundary data and \( h \in \mathrm{C}^0(\overline{\Omega}) \). Suppose that there exist $\underline{u}, \overline{u}\in \mathrm{C}^0(\overline{\Omega})\cap \mathrm{C}^{0,1}(\Omega)$ are, respectively a viscosity subsolution and supersolution of \eqref{equation} satisfying $\underline{u}, \overline{u}=g$ on $\partial\Omega$. Define the class of functions
$$
\mathcal{S}(\Omega)\coloneqq \{\omega(x): \,\, v\,\, \text{is a supersolution to } \eqref{equation}\,\, \text{and} \,\, \underline{u}\leq \omega\leq \overline{u}\}.
$$
Then,
$$
u(x)=\inf_{\mathcal{S}(\Omega)}\omega(x),
$$
is the unique continuous viscosity solution to \eqref{equation}.
\end{theorem}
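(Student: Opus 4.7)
The plan is to implement Perron's method, using Lemma \ref{ComparisonPrin} both to validate the construction and to obtain uniqueness. First I observe that $\mathcal{S}(\Omega)$ is nonempty since $\overline{u}\in \mathcal{S}(\Omega)$, and every $\omega\in\mathcal{S}(\Omega)$ satisfies $\underline{u}\leq \omega\leq \overline{u}$, so the candidate
$$
u(x)\coloneqq \inf_{\omega\in\mathcal{S}(\Omega)}\omega(x)
$$
is well-defined and obeys $\underline{u}\leq u\leq \overline{u}$ in $\overline{\Omega}$. In particular $u=g$ on $\partial\Omega$. To analyze $u$ I pass to its semicontinuous envelopes $u^\ast(x)=\limsup_{y\to x}u(y)$ and $u_\ast(x)=\liminf_{y\to x}u(y)$. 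The first routine step is to check that $u_\ast$ is a viscosity supersolution of \eqref{equation}: given a smooth test function $\phi$ touching $u_\ast$ from below at $x_0\in\Omega$ with $|\nabla\phi(x_0)|\neq 0$, I pick $\omega_k\in\mathcal{S}(\Omega)$ and $x_k\to x_0$ with $\omega_k(x_k)\to u_\ast(x_0)$, perturb $\phi$ to touch each $\omega_k$ from below near $x_k$, invoke the supersolution condition for $\omega_k$, and pass to the limit using continuity of $F$, $\mathfrak{a}$, $\lambda_0$ and of the power nonlinearity in the gradient.

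The heart of the argument is the claim that $u^\ast$ is a viscosity subsolution, proved by contradiction via the classical "bump" construction. Suppose $u^\ast$ is not a subsolution at some $x_0\in\Omega$: there exists $\phi\in C^2$ touching $u^\ast$ from above at $x_0$ with $|\nabla\phi(x_0)|\neq 0$ and
$$
|\nabla\phi(x_0)|^p F(D^2\phi(x_0))+\mathfrak{a}(x_0)|\nabla\phi(x_0)|^q-\lambda_0(x_0)\phi(x_0)^\mu_+<h(x_0).
$$
By continuity, after replacing $\phi$ by $\tilde\phi(x)\coloneqq \phi(x)+\delta(|x-x_0|^2-r^2/4)$ for small $\delta,r>0$, the strict inequality persists in $B_r(x_0)$ and moreover $\tilde\phi>\phi$ on $\partial B_r(x_0)$ while $\tilde\phi(x_0)<\phi(x_0)=u^\ast(x_0)$. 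Setting
$$
\tilde u(x)\coloneqq \begin{cases}\min\{u(x),\tilde\phi(x)\}, & x\in B_r(x_0),\\ u(x), & x\notin B_r(x_0),\end{cases}
$$
one verifies in the standard way that $\tilde u$ is still a supersolution of \eqref{equation} (the minimum of two supersolutions is a supersolution; continuity across $\partial B_r(x_0)$ follows from the pointwise comparison $\tilde\phi\geq u$ there) and that $\underline{u}\leq \tilde u\leq \overline{u}$. Hence $\tilde u\in\mathcal{S}(\Omega)$, yet $\tilde u(x_0)\leq \tilde\phi(x_0)<u(x_0)$, contradicting the definition of $u$ as the infimum. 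Thus $u^\ast$ is a subsolution.

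Since $u_\ast\leq u\leq u^\ast$ coincide with $g$ on $\partial\Omega$, I apply Lemma \ref{ComparisonPrin}(ii) (which is available because $\inf_\Omega\lambda_0>0$ and $h_1=h_2=h$) to conclude $u^\ast\leq u_\ast$, so that $u=u^\ast=u_\ast$ is a continuous viscosity solution attaining the boundary datum $g$. Uniqueness then follows from Lemma \ref{ComparisonPrin}(ii) applied twice (comparing any two solutions in both orders). The main obstacle is the regularity assumption in Lemma \ref{ComparisonPrin}: comparison requires local Lipschitz continuity of one of the two functions involved. At the final step I handle this by sandwiching between the Lipschitz barriers $\underline{u},\overline{u}\in C^{0,1}(\Omega)$ and exploiting the Lipschitz estimate to be established in Section \ref{LLE} for any viscosity solution of \eqref{equation}; this supplies the requisite Lipschitz regularity and closes both the equality $u^\ast=u_\ast$ and the uniqueness statement.
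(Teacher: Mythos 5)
Your proposal is a faithful execution of Perron's method, which is precisely what the paper invokes (the paper itself offers only a sketch, stating that existence ``follows from an application of Perron's method, provided that the comparison principle holds''). The three ingredients you spell out --- $u_\ast$ is a supersolution, $u^\ast$ is a subsolution via the bump construction, then comparison to force $u^\ast\leq u_\ast$ and uniqueness --- are exactly the intended route, so the approach matches.

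There is, however, one step you correctly flag but do not actually close, and which the paper also glosses over: invoking Lemma~\ref{ComparisonPrin} to deduce $u^\ast\leq u_\ast$. That lemma requires one of the two functions being compared to be locally Lipschitz in $\Omega$. Your proposed fix is doubly unsound. First, being sandwiched between the Lipschitz barriers $\underline{u}\leq u\leq\overline{u}$ does not make $u$ (nor $u^\ast$ nor $u_\ast$) Lipschitz --- sandwiching controls the value, not the modulus of continuity, except at points where the two barriers touch. Second, Lemma~\ref{lipschitzcontinuity} is proved for viscosity \emph{solutions} of \eqref{equation}; at this stage of the Perron argument $u^\ast$ is only known to be a subsolution and $u_\ast$ only a supersolution, so the lemma cannot be applied to either of them without circularity. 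A genuine repair would have to either (a) rerun the Ishii--Lions doubling argument of Section~\ref{LLE} directly on the pair $(u^\ast,u_\ast)$, which simultaneously yields Lipschitz control and the inequality $u^\ast\leq u_\ast$, or (b) restrict $\mathcal{S}(\Omega)$ a priori to locally Lipschitz supersolutions and verify that the bump perturbation $\min\{u,\tilde\phi\}$ stays in that class, which again requires knowing $u$ is Lipschitz. Your uniqueness argument, by contrast, is fine as stated: once two continuous viscosity solutions are in hand, Lemma~\ref{lipschitzcontinuity} legitimately gives their Lipschitz regularity and Lemma~\ref{ComparisonPrin}(ii) applies in both directions.

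A more minor point: in the bump construction the contradiction is not literally ``$\tilde u(x_0)<u(x_0)$''. Since $\phi$ touches $u^\ast$ (not $u$) from above, you get $\tilde\phi(x_0)<u^\ast(x_0)=\limsup_{y\to x_0}u(y)$, and hence there exist points $y$ near $x_0$ with $u(y)>\tilde\phi(y)$, so $\tilde u(y)=\tilde\phi(y)<u(y)$; that is what contradicts minimality of the infimum. You should also note that the paper's notion of viscosity solution has the two-branch structure (test functions with nonvanishing gradient, or $u$ locally constant), and the passage to the limit in your verification that $u_\ast$ is a supersolution must respect that dichotomy, not only the case $|\nabla\phi(x_0)|\neq 0$.
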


\section{Local Lipschitz estimates}\label{LLE}
    
In this section, we establish local Lipschitz estimate for viscosity solution of \eqref{equation}. Before presenting the result, observe that, without loss of generality, \(u\), \(\mathfrak{a}\), and \(\lambda_{0}\) can be considered normalized; otherwise, it suffices to consider the new function
$$
v(x)=\frac{u(x)}{\kappa},
$$
for 
$$
\kappa\coloneqq\max\left\{1,\|u\|_{\infty},\|\mathfrak{a}\|^{\frac{1}{p+1-q}}_{\infty},\|\lambda_0\|^{\frac{1}{p+1-\mu}}_{\infty}\right\}.
$$
Moreover, since the estimate is local, we shall henceforth take our domain to be the unit ball \(B_{1}\).

\begin{lemma}[Lipschitz continuity]\label{lipschitzcontinuity}
Let $u\in \mathrm{C}^0(B_1)$ be a viscosity solution of \eqref{equation}. Then, $u\in \mathrm{C}^{0,1}_{loc}(B_1)$ and 
$$
\|u\|_{\mathrm{C}^{0,1}(B_{1/2})}\leq \mathrm{C}\cdot\max\left\{1,\|u\|_{\infty},\|\mathfrak{a}\|^{\frac{1}{p+1-q}}_{\infty},\|\lambda_0\|^{\frac{1}{p+1-\mu}}_{\infty}\right\},
$$
where $\mathrm{C}$ is a universal constant depending only on $n,\lambda,\Lambda,p$, and $q$. 
\end{lemma}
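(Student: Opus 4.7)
After the normalization indicated before the statement we may assume $\|u\|_\infty,\|\mathfrak{a}\|_\infty,\|\lambda_0\|_\infty\le 1$, so that the right-hand side $\lambda_0(x)u(x)^\mu_+$ is a priori bounded by $1$ and we only need to exhibit a universal $L$ with $u(x)-u(y)\le L|x-y|$ for all $x,y\in B_{1/2}$. The strategy is the classical doubling-of-variables / Ishii--Lions scheme adapted to singular/degenerate fully nonlinear operators with a sub-principal Hamiltonian, in the spirit of \cite{BirDem06,IS,BirDem16}. Fix $x_0\in B_{1/2}$ and introduce the auxiliary function
\begin{equation*}
\Phi(x,y) \;=\; u(x)-u(y) - L_1\,\omega\!\left(|x-y|\right) - L_2\bigl(|x-x_0|^2+|y-x_0|^2\bigr),
\end{equation*}
where $\omega(t)=t-\omega_0 t^{3/2}$ on $[0,t_0]$ (extended to a smooth bounded concave function beyond $t_0$), with $\omega_0,t_0$ fixed universal and $L_2$ chosen large (depending only on $\|u\|_\infty$) so that any maximum of $\Phi$ lies in $B_{3/4}\times B_{3/4}$. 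I will prove, by contradiction, that if $L_1$ is chosen large enough (universally), then $\sup\Phi\le 0$; taking then $x_0=x$ and $y$ arbitrary yields the desired Lipschitz bound.

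Assume $\sup\Phi>0$ and let $(\bar x,\bar y)$ be a maximizer. Since $\omega$ vanishes only at $0$ and the quadratic penalty is non-negative, necessarily $\bar x\neq\bar y$, and standard comparison shows $|\bar x-\bar y|\to 0$ as $L_1\to\infty$, so we may work in the regime $|\bar x-\bar y|\le t_0$. Applying the Ishii--Lions lemma to $\Phi$ produces $X,Y\in\mathrm{Sym}(n)$ and vectors
\begin{equation*}
\vec\xi_x \;=\; L_1\omega'(|\bar x-\bar y|)\,\tfrac{\bar x-\bar y}{|\bar x-\bar y|} + 2L_2(\bar x-x_0),\qquad
\vec\xi_y \;=\; L_1\omega'(|\bar x-\bar y|)\,\tfrac{\bar x-\bar y}{|\bar x-\bar y|} - 2L_2(\bar y-x_0),
\end{equation*}
such that $(\vec\xi_x,X)\in\overline{\mathcal{J}}^{2,+}u(\bar x)$ and $(\vec\xi_y,Y)\in\overline{\mathcal{J}}^{2,-}u(\bar y)$, together with a matrix inequality that, combined with the strict concavity of $\omega$ (namely $\omega''(t)\le -c\,t^{-1/2}<0$), yields
\begin{equation*}
\mathcal{P}^+_{\lambda,\Lambda}(X-Y) \;\le\; -c_1 L_1 \,|\bar x-\bar y|^{-1/2}
\end{equation*}
for some universal $c_1>0$. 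Moreover, since $\omega'(t)\ge 1/2$ on $[0,t_0]$ and $L_2$ is fixed, the crucial point of the construction is that
\begin{equation*}
|\vec\xi_x|,\,|\vec\xi_y| \;\sim\; L_1\qquad\text{for }L_1\gg L_2,
\end{equation*}
which \emph{prevents the principal part from degenerating or blowing up}: even when $p\in(-1,0)$ (singular case), the gradients stay of order $L_1$ away from $0$.

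Subtracting the two viscosity inequalities at $(\bar x,\vec\xi_x,X)$ and $(\bar y,\vec\xi_y,Y)$ and using uniform ellipticity gives, for a universal $c_2>0$,
\begin{equation*}
c_2\,L_1^{\,p+1}\,|\bar x-\bar y|^{-1/2} \;\le\; \bigl|\,\mathfrak{a}(\bar x)|\vec\xi_x|^q-\mathfrak{a}(\bar y)|\vec\xi_y|^q\bigr| + \bigl|\lambda_0(\bar x)u(\bar x)^\mu_+-\lambda_0(\bar y)u(\bar y)^\mu_+\bigr|,
\end{equation*}
and both terms on the right are dominated by $C(1+L_1^{\,q})$ since $\|\mathfrak{a}\|_\infty,\|\lambda_0\|_\infty,\|u\|_\infty\le 1$. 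Using $q<p+1$ and $\mu\ge 0$, the left-hand side grows strictly faster in $L_1$ than the right-hand side, giving a contradiction as soon as $L_1$ is fixed large enough (depending only on $n,\lambda,\Lambda,p,q$). A symmetric argument swapping the roles of $x$ and $y$ provides the lower bound, and together they yield the Lipschitz estimate with the asserted universal constant.

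\textbf{Main obstacle.} The central difficulty is preventing the principal operator $|\nabla u|^p F(D^2u)$ from collapsing (when $p>0$) or exploding (when $-1<p<0$) at the doubling points: the Hamiltonian $|\nabla u|^q$ scales differently, and without a lower bound on $|\vec\xi|$ one cannot absorb it into the leading term. This is precisely what the concave modulus $\omega$ together with the quadratic penalty achieves, and what makes the condition $q<p+1$ essential for closing the inequality.
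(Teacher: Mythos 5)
Your proposal follows the paper's argument essentially verbatim: an Ishii--Lions doubling-of-variables scheme with the same concave modulus $\omega(t)=t-\omega_0 t^{3/2}$ and quadratic localization penalty, using the strict concavity of $\omega$ to make $\mathcal{P}^+_{\lambda,\Lambda}(X-Y)$ negative of order $L_1$, the two-sided bound $1\lesssim|\vec\xi_x|,|\vec\xi_y|\lesssim L_1$ to keep the degenerate/singular prefactor under control, and the condition $q<p+1$ to close the contradiction for $L_1$ large. One minor imprecision worth noting: the paper first divides each viscosity inequality by $|\xi_{\bar x}|^p$ and $|\xi_{\bar y}|^p$ respectively before subtracting, which avoids the cross-term $\bigl(|\vec\xi_x|^p-|\vec\xi_y|^p\bigr)F(Y)$ that your direct subtraction would otherwise have to estimate; this is routine given $|\vec\xi_x|,|\vec\xi_y|\sim L_1$, but should be made explicit.
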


\begin{proof}
Here, we use techniques introduced in \cite{IL}. Let us fix $0<r<1/2$ and for constants $\overline{L},\varrho$, we define $\varphi:\overline{B}_r(x_0)\times\overline{B}_r(x_0)\rightarrow\mathbb{R}$ given by
$$
\varphi(x,y)\coloneqq \overline{L}v(|x-y|)+\varrho(|x-x_0|^2+|y-x_0|^2)
$$
for each $x_0\in B_{1/2}$, where
    \begin{equation*}
    v(s)=
    \begin{cases}
        s-\omega_0s^{\frac{3}{2}}\quad \text{if }\,\, 0\leq s\leq s_0,\\
        v(s_0)\quad\quad\,\,\,\, \text{if }\,\,s>s_0.
    \end{cases}
    \end{equation*}
We choose $\omega_0$ such that $s_0:=(2/3\omega_0)^2\geq 1$, then we can fix $\omega_0=1/3$. Now consider
$$
    M(x_0)=\sup_{x,y\in \overline{B}_{r}(x_0)}\left\{u(x)-u(y)-\varphi(x,y)\right\}.
$$
To prove the local Lipschitz regularity of $u$, we show that there exist two constants $\overline{L}$, $\varrho$ such that $M(x_0)\leq 0$. We argue by contradiction that $M(x'_0)>0$ for some $x'_0\in B_{1/2}$ and for any $\overline{L},\varrho>0$. Let $(\overline{x}, \overline{y})\in\overline{B}_r(x'_0)\times\overline{B}_r(x'_0)$ be the pair where $M$ is attained. We observe that since $M>0$
\begin{equation}\label{inter_point}
\overline{L}v(|\overline{x}-\overline{y}|)+\varrho(|\overline{x}-x'_0|^2+|\overline{y}-x'_0|^2)<u(\overline{x})-u(\overline{y})\leq 2.
\end{equation}
If we choose $\varrho=\left(\frac{4\sqrt{2}}{r}\right)^2$ and $\overline{L}\geq \varrho$, we obtain that $\overline{x}$ and $\overline{y}$ are interior maximum point, and $\overline{x}\neq \overline{y}$, otherwise, $M\leq 0$. We use the Ishii--Lions lemma \cite[Theorem 3.2]{CIL} to guarantee the existence of limiting sub-jet and super-jet
$$
(\xi_{\overline{x}},X)\in \overline{\mathcal{J}}^{2,+}_{B_{1/2}(x'_0)}u(\overline{x})\quad\text{and}\quad (\xi_{\overline{y}},Y)\in \overline{\mathcal{J}}^{2,-}_{B_{1/2}(x'_0)} u(\overline{y}),
$$ 
where
$$
\xi_{\overline{x}}\coloneqq D_x\varphi(\overline{x},\overline{y})=\overline{L}v'(|\overline{x}-\overline{y}|)\frac{\overline{x}-\overline{y}}{|\overline{x}-\overline{y}|}+2\varrho(\overline{x}-x'_0)
$$
and
$$
    \xi_{\overline{y}}\coloneqq-D_y\varphi(\overline{x},\overline{y})=\overline{L}v'(|\overline{x}-\overline{y}|)\frac{\overline{x}-\overline{y}}{|\overline{x}-\overline{y}|}-2\varrho(\overline{y}-x'_0)
$$
such that the matrices $X$ and $Y$ verify the inequality
\begin{equation}\label{matrix_inequality1}
\left(
\begin{array}{ccc}
X   & 0 \\
0  &-Y 
\end{array}
\right)
\leq 
\left(
\begin{array}{ccc}
Z  & -Z \\
-Z  & Z
\end{array}
\right)+(2\varrho+\kappa)I,
\end{equation}
for $\kappa>0$ depends only on the norm of $Z$, can be sufficiently small, and
$$
Z\coloneqq\overline{L}\left[\frac{v'(|\overline{x}-\overline{y}|)}{|\overline{x}-\overline{y}|}I+\left(v''(|\overline{x}-\overline{y}|)-\frac{v'(|\overline{x}-\overline{y}|)}{|\overline{x}-\overline{y}|}\right)\frac{(\overline{x}-\overline{y})\otimes(\overline{x}-\overline{y})}{|\overline{x}-\overline{y}|^2}\right].
$$
By the matrix inequality \eqref{matrix_inequality1}, we can deduce
$$
    \langle(X-Y)z,z\rangle\leq(4\varrho+2\kappa)|z|^2,
$$
for any vectors of the form $(z,z)\in\mathbb{R}^{2n}$, that is, all the eigenvalues of $(X-Y)$ are below $4\varrho+2\kappa$. On the other hand, applying \eqref{matrix_inequality1} again, now to the vector
$$
\overline{z}\coloneqq\left(\frac{\overline{x}-\overline{y}}{|\overline{x}-\overline{y}|},\frac{\overline{y}-\overline{x}}{|\overline{x}-\overline{y}|}\right),
$$
we get
$$
    \left\langle(X-Y)\overline{z},\overline{z}\right\rangle\leq(4\varrho+2\kappa+4\overline{L}v''(|\overline{x}-\overline{y}|))|\overline{z}|^2.
$$
Then, at least one eigenvalue of $(X-Y)$ is below $4\varrho+2\kappa+4\overline{L}v''(|\overline{x}-\overline{y}|)$, this quantity will be negative when $\overline{L}\geq \frac{4\varrho+2}{3\omega_0}$. Therefore, we obtain
\begin{align}\label{P+}
    \mathcal{P}^+(X-Y) &\leq  \Lambda(n-1)(4\varrho+2\kappa)+\lambda(4\varrho+2\kappa+4\overline{L}v''(\overline{x}-\overline{y}))\nonumber\\
    &= 2(\lambda+\Lambda(n-1))(2\varrho+\kappa)+4\lambda \overline{L}v''(|\overline{x}-\overline{y}|).
\end{align}
In addition, we have the following inequalities in the viscosity sense 
\begin{equation}
\label{inequalityVS}
\left\{
\begin{array}{cccc}
|\xi_{\overline{x}}|^pF(X)+\mathfrak{a}(\overline{x})| \xi_{\overline{x}}|^q & \geq & \lambda_0(\overline{x}) u(\overline{x})^{\mu},\\
|\xi_{\overline{y}}|^pF(Y)+\mathfrak{a}(\overline{y})| \xi_{\overline{y}}|^q & \leq &\lambda_0(\overline{y}) u(\overline{y})^{\mu}.
\end{array}
\right.
\end{equation}
Note that, $|\xi_{\overline{x}}|>1$ and similarly $|\xi_{\overline{y}}|>1$, because
\begin{equation}\label{des1}
|\xi_{\overline{x}}| \geq \overline{L}v'(|\overline{x}-\overline{y}|)-2\varrho|\overline{x}-x'_0|
\geq \frac{1}{2}\left(\frac{4\varrho+2}{3\omega_0}-\varrho\right)\\
    = \varrho+1\\
    > 1.
\end{equation}
In addition, the following inequality holds
\begin{equation}
    |\xi_{\overline{x}}|,|\xi_{\overline{y}}|\leq \overline{L}+2\varrho\leq 2\overline{L}\label{des2}.
\end{equation}
From \eqref{inequalityVS}, we have
\begin{equation}
\label{inequalityVSReit}
\begin{cases}
F(X)\geq \lambda_0(\overline{x}) u(\overline{x})^{\mu}|\xi_{\overline{x}}|^{-p} -\mathfrak{a}(\overline{x})| \xi_{\overline{x}}|^{q-p},\\
F(Y)\leq \lambda_0(\overline{y}) u(\overline{y})^{\mu}|\xi_{\overline{y}}|^{-p}-\mathfrak{a}(\overline{y})| \xi_{\overline{y}}|^{q-p}.
\end{cases}
\end{equation}
Hence, using the inequalities \eqref{P+} and \eqref{inequalityVSReit}, 
we obtain
\begin{align*}
\lambda_0(\overline{x}) u(\overline{x})^{\mu}|\xi_{\overline{x}}|^{-p}-\mathfrak{a}(\overline{x})|\xi_{\overline{x}}|^{q-p} &\leq F(X)\\
&\leq F(Y)+\mathcal{P}^+(X-Y)\\
& \leq \{\lambda_0(\overline{y}) u(\overline{y})^{\mu}|\xi_{\overline{y}}|^{-p}-\mathfrak{a}(\overline{y})|\xi_{\overline{y}}|^{q-p}\}\\
& +2(\lambda+\Lambda(n-1))(2\varrho+\kappa)-\lambda \overline{L},
\end{align*}
consequently
$$
\lambda \overline{L}\leq 2A_1+2A_2+2(\lambda+\Lambda(n-1))(2\varrho+\kappa)
$$
where $A_1=\max\{1,2^{-p}\overline{L}^{-p}\}$ and $A_2=\max\{1,2^{q-p}\overline{L}^{q-p}\}$, the definition of $A_1$ and $A_2$ follows from the inequalities \eqref{des1} and \eqref{des2}. Therefore, we get a contradiction when $\overline{L}$ is large enough since $-p<1$ and $q-p<1$.
\end{proof}

\section{Improved regularity estimate: Proof of Theorem \ref{improvement_regularity}}\label{IRE}

In this section, we obtain a sharp and improved regularity for non-negative solutions to \eqref{equation} along the free boundary \( \partial\{u > 0\} \). To this end, a geometric iteration is developed as in \cite{DJ,Leitao,ET} with suitable adaptations to our setting, which is possible through the flatness improvement below.

\begin{lemma}\label{estimaHA}
Given $\varepsilon\in (0,1)$, there exists $\delta(\varepsilon)>0$ depending on $n,\lambda,\Lambda,p,q,\mu$, such that if $u$ solves \eqref{equation} in $B_1$, with $0\leq u\leq 1$, $u(0)=0$, and
$$
\|\mathfrak{a}\|_{L^{\infty}(B_1)}+\|\lambda_0\|_{L^{\infty}(B_1)}\leq\gamma,
$$
with $0<\gamma\leq\delta(\varepsilon)$, then
$$
\sup_{B_{1/2}}u(x)\leq\varepsilon.
$$
\end{lemma}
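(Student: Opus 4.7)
\smallskip

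\noindent\textbf{Proof strategy for Lemma \ref{estimaHA}.} The natural approach is a standard argument by contradiction and compactness, coupled with the stability of viscosity solutions and a strong minimum principle for the degenerate/singular equation that arises in the limit. The plan is to negate the conclusion, extract a convergent subsequence using the Lipschitz estimate from the preceding section, pass to the limit in the PDE, and reach a contradiction via the strong minimum principle.

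\smallskip

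\noindent\textbf{Step 1 (Contradiction setup and compactness).} Suppose the assertion fails for some $\varepsilon_0 \in (0,1)$. Then there exist sequences $\{u_k\}$, $\{\mathfrak{a}_k\}$, $\{\lambda_{0,k}\}$, $\{F_k\}$, and $\{f_k\}$ such that $u_k$ is a viscosity solution of the corresponding version of \eqref{equation} in $B_1$, with $0 \leq u_k \leq 1$, $u_k(0) = 0$, $\|\mathfrak{a}_k\|_{L^{\infty}(B_1)} + \|\lambda_{0,k}\|_{L^{\infty}(B_1)} \leq 1/k$, and yet $\sup_{B_{1/2}} u_k > \varepsilon_0$. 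Because $\|u_k\|_\infty \leq 1$ and the coefficients are uniformly bounded, Lemma \ref{lipschitzcontinuity} yields a Lipschitz bound on $u_k$ in $B_{3/4}$ that is independent of $k$. By Arzel\`a--Ascoli, a subsequence converges locally uniformly, $u_k \to u_\infty$, in $B_{3/4}$; simultaneously, since each $F_k$ is $(\lambda,\Lambda)$-uniformly elliptic, up to a further subsequence $F_k \to F_\infty$ locally uniformly on $\mathrm{Sym}(n)$, with $F_\infty$ sharing the same ellipticity constants.

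\smallskip

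\noindent\textbf{Step 2 (Passing to the limit).} The Hamiltonian term satisfies $\|\mathfrak{a}_k|\nabla u_k|^q\|_\infty \leq \|\mathfrak{a}_k\|_\infty L^q \to 0$, where $L$ is the uniform Lipschitz constant, and analogously the right-hand side obeys $|f_k(x,u_k)| \leq \|\lambda_{0,k}\|_\infty \to 0$. Standard stability of viscosity solutions under locally uniform convergence of the operator and the data (see e.g. \cite{CIL}) then guarantees that $u_\infty$ is a non-negative viscosity solution of
\[
|\nabla u_\infty|^p\, F_\infty(D^2 u_\infty) = 0 \quad \text{in } B_{3/4},
\]
with $u_\infty(0)=0$ and $0 \leq u_\infty \leq 1$. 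Note that the two-branch notion of viscosity solution adopted in Section \ref{HD} is exactly the one required to absorb the degenerate/singular factor $|\nabla u|^p$ in the passage to the limit.

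\smallskip

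\noindent\textbf{Step 3 (Strong minimum principle and contradiction).} A non-negative viscosity solution of $|\nabla v|^p F(D^2 v) = 0$ with an interior zero is identically zero in the connected component of its zero set; this is a form of strong minimum principle established in the Birindelli--Demengel series (see \cite{BirDem04,BirDem06,BirDem10}) via the Harnack inequality for such degenerate/singular fully nonlinear operators. Applying it here, $u_\infty(0)=0$ forces $u_\infty \equiv 0$ in $B_{1/2}$. But the uniform convergence $u_k \to u_\infty$ combined with $\sup_{B_{1/2}} u_k > \varepsilon_0$ yields $\sup_{B_{1/2}} u_\infty \geq \varepsilon_0 > 0$, a contradiction. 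Hence the $\delta(\varepsilon)$ asserted must exist.

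\smallskip

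\noindent\textbf{Main obstacle.} The delicate point is the stability step together with the applicability of the strong minimum principle: one must carefully justify that the limit function is a viscosity solution of the homogeneous degenerate/singular equation (in particular handling the definition with the non-vanishing gradient condition and the constant-ball alternative), and then invoke the correct version of the strong minimum principle tailored to operators of the form $|\nabla v|^p F(D^2 v)$ for $p>-1$. Once these ingredients are in place, the remainder of the proof is a routine compactness argument.
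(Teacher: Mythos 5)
Your overall strategy — contradiction, compactness via the Lipschitz estimate, passage to the limit, and a minimum principle — is the same skeleton as the paper's proof. However, you diverge from the paper in the two key ingredients, and one of them deserves a closer look.

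The paper does not stop at the degenerate limit equation $|\nabla u_\infty|^p F_\infty(D^2 u_\infty)=0$. Instead, the bulk of its proof is devoted to showing that $u_\infty$ actually solves the \emph{clean} uniformly elliptic equation $F_\infty(D^2 u_\infty)=0$ in $B_{3/4}$ in the usual viscosity sense (testing also with critical test functions). This is done by splitting into the case where the gradient of the test polynomial is nonzero at the touching point (where dividing by $|\mathcal B|^p$ and passing to the limit is straightforward), and the delicate case where it vanishes, which is handled by perturbing the test function with $\gamma|P_E(x)|$, where $P_E$ is the projection onto the eigenspace of positive eigenvalues of $M$, and carrying out a further case analysis. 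Once the clean equation is obtained, the conclusion follows from the classical Harnack inequality of Caffarelli--Cabr\'e \cite{Caffa-Cabre}. In your version, you keep the gradient factor and then appeal to a strong minimum principle for $|\nabla v|^p F(D^2 v)=0$ from Birindelli--Demengel; that route can be made to work, and it trades the delicate stability argument for a more sophisticated final ingredient.

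The point I would flag as a genuine soft spot is the phrase ``standard stability of viscosity solutions.'' Even with the two-branch definition, stability is not completely routine here: given a test function $\varphi$ touching $u_\infty$ from below at $x_0$ with $\nabla\varphi(x_0)\neq 0$, one obtains touching points $x_j$ for $u_j$, and one must verify $\nabla\varphi(x_j)\neq 0$ for $j$ large (fine), but one also has to explain why nothing is lost when $\nabla\varphi(x_0)=0$ — this is exactly the second branch, and it needs some care about whether $u_\infty$ being locally constant can be read off from the sequence. Moreover, if you keep the gradient factor in the limit equation you must be precise about which strong minimum principle you are invoking and check that it applies for all $p>-1$ and general $(\lambda,\Lambda)$-elliptic $F_\infty$. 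In short, your proof is plausible and genuinely different at the level of which limit equation is derived and which Harnack-type result is used, but the stability step is where most of the real work lies, and labelling it ``standard'' understates it; the paper makes it the centerpiece of the proof.
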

\begin{proof}
We proceed by contradiction. Suppose that the statement does not hold. Then, there exist $\varepsilon_0>0$ and sequences of functions $\{F_k\}_{k\in\mathbb{N}}$, $\{u_k\}_{k\in\mathbb{N}}$, $\{\mathfrak{a}_k\}_{k\in\mathbb{N}}$, $\{\lambda_k\}_{k\in\mathbb{N}}$ satisfying
\begin{eqnarray}
0\leq u_k\leq 1, \quad \text{with}\quad u_k(0)=0,\label{IF1}\\
\|\mathfrak{a}_k\|_{L^{\infty}(B_1)}+\|\lambda_k\|_{L^{\infty}(B_1)}\leq k^{-1},\label{IF2}
\end{eqnarray}
    and
\begin{equation}\label{IF3}
|\nabla u_k|^pF_k(D^2u_k)+\mathfrak{a}_k(x)|\nabla u_k|^q=\lambda_k(x)(u_k)^{\mu}_+,
\end{equation}
for $\lambda_k(x)\geq 0$ and $F_k$ are $(\lambda,\Lambda)$-elliptic operators. However
\begin{equation}\label{stabilityContradiction} 
\sup_{B_{1/2}}u_k>\varepsilon_0.
\end{equation}
Observe that, by uniform ellipticity and, if necessary, passing to a subsequence, we have
$$
F_k(M)\rightarrow F_{\infty}(M) \quad\text{locally uniformly in}\,\, \M(n).
$$ 
From Lipschitz local estimate, Lemma \ref{lipschitzcontinuity}, there exist $u_{\infty}$ such that, up to subsequence,
$$
u_k\to u_{\infty}\quad \text{locally uniformly in}\,\, B_1.
$$
Moreover, it follows that $u_{\infty}(0)=0$ and $0\leq u_{\infty}\leq 1$. 

Now we proceed to prove that $u_{\infty}$ solves in the viscosity sense
\begin{equation}\label{stabilityLemma}
    F_{\infty}(D^2 u_{\infty})=0\quad\text{in } B_{3/4}.
\end{equation}
For that, we only deduce that $u_{\infty}$ is a supersolution, since property of subsolution is derived in a similar way. In this sense, consider a test function $\varphi$ that touches $u_{\infty}$ from below at $x\in B_{3/4}$. For simplicity, we can assume that $|x|=u_{\infty}(0)=0$ and $\varphi$ is a quadratic polynomial, then
$$
\varphi(x)=\frac{1}{2}\langle Mx,x\rangle+\langle \mathcal{B},x\rangle.
$$
By the uniform convergence of $u_j$ to $u_{\infty}$, there exist $x_j\in B_r$ for small $r>0$, such that
$$
\varphi_j(x)\coloneqq \frac{1}{2}\langle M(x-x_j),(x-x_j)\rangle+\langle \mathcal{B},x-x_j\rangle +u_j(x_j)
$$
touches $u_j$ from below at point $x_j$. Since $u_j$ is a viscosity solution to \eqref{IF3}, if $\mathcal{B}\neq 0$, we obtain
$$
|\mathcal{B}|^p F_j(M)+\mathfrak{a}_j(x_j)|\mathcal{B}|^q\leq \lambda_j(x_j)u^{\mu}_j(x_j).
$$
Then, letting $j\rightarrow\infty$, it implies that $F_{\infty}(M)\leq 0$.

On the other hand, when $\mathcal{B}=0$, we want to justify that $F_{\infty}(M)\leq 0$. Assume by contradiction that $F_{\infty}(M)>0$, since $F_{\infty}$ is uniformly elliptic, it follows that $M$ has at least one positive eigenvalue. Let $\mathbb{R}^n$=$E\oplus L$ be the orthogonal sum, where $E=\operatorname{span}(e_1,...,e_k)$ is the subspace generated by the eigenvectors corresponding to the positive eigenvalues. Now, let $P_E$ be the orthogonal projection on $E$, and define the test function below 
$$
    \psi(x)\coloneqq \varphi(x)+\gamma|P_E(x)|=\frac{1}{2}\langle Mx,x\rangle+\gamma|P_E(x)|
$$
Since $u_j\rightarrow u_{\infty}$ locally uniformly in $B_1$ and $\varphi$ touches $u$ from below at $0$, then for $\gamma$ small enough, $u_j-\psi$ has a minimum at a point $x^{\gamma}_j\in B_r$. We also have that, up to subsequence, $x^{\gamma}_j\rightarrow x_0$ as $j\rightarrow\infty$. 

Assume that $P_E(x^{\gamma}_j)=0$ for a infinite quantity of points $x^{\gamma}_j$. We claim that $F_{\infty}(M)\leq 0$, generating a contradiction. To this end, note that
$$
    \bar{\psi}(x)\coloneqq \varphi(x)+\gamma e\cdot P_E(x)
$$
touches $u_j$ from below at $x^{\gamma}_j$ for each $e\in \mathbb{S}^{n-1}$. Moreover, suppose that $Mx_0=0$. Then, since $u_j$ is a viscosity solution, for $1\ll j$, we have
$$
|Mx^{\gamma}_j+\gamma e|^p F_j(M)+\mathfrak{a}_j(x^{\gamma}_j)|Mx^{\gamma}_j+\gamma e|^q\leq\lambda_j(x^{\gamma}_j)u^{\mu}_j(x^{\gamma}_j)
$$
for $e\in E\cap\mathbb{S}^{n-1}$. Thus, letting $j\rightarrow\infty$, it follows that $F_{\infty}(M)\leq 0$.
Now, if $Mx_0>0$, we take $\gamma<\frac{1}{4}|Mx_0|$, then $|Mx_0+\gamma e|>0$ as $j\rightarrow\infty$. This implies that $F_{\infty}(M)\leq 0$.

Next, we consider the case $P_E(x^{\gamma}_j)\neq 0$. Note that, $\psi$ is smooth in a neighbourhood of $x^{\gamma}_j$ and $x\mapsto |P_E(x)|$ is convex, which implies in the following viscosity inequality
\begin{align*}
\lambda_j(x^{\gamma}_j)u^{\mu}_j(x^{\gamma}_j)&\geq |Mx^{\gamma}_j+\gamma \nu^{\gamma}_j|^pF_j(M+\gamma B)+\mathfrak{a}_j(x^{\gamma}_j)|Mx^{\gamma}_j+\gamma \nu^{\gamma}_j|^q\\
& \geq |Mx^{\gamma}_j+\gamma \nu^{\gamma}_j|^pF_j(M)+\mathfrak{a}_j(x^{\gamma}_j)|Mx^{\gamma}_j+\gamma \nu^{\gamma}_j|^q
\end{align*}
where
$$\nu^{\gamma}_j\coloneqq\frac{P_E(x^{\gamma}_j)}{|P_E(x^{\gamma}_j)|}\quad\text{and}\quad B\coloneqq|P_E(x^{\gamma}_j)|^{-1}(I-\nu^{\gamma}_j\otimes\nu^{\gamma}_j).$$
Therefore, by writing $e=\nu^{\gamma}_j$, it is enough to proceed as in the case $P_E(x^{\gamma}_j)=0$ to prove by contradiction that $F_{\infty}(M) \leq 0$, via distinguishing $Mx_0 = 0$ and $Mx_0\neq0$.

Finally, we conclude that $u_{\infty}$ solves \eqref{stabilityLemma} in the viscosity sense. Thus, from Harnack inequality, see \cite[Theorem 4.3]{Caffa-Cabre}, we obtain $u_{\infty}\equiv0$ in $B_{2/3}$, contradicting \eqref{stabilityContradiction}.
\end{proof}

Finally, we are ready to prove the Theorem \ref{improvement_regularity}.

\begin{proof}[Proof of Theorem \ref{improvement_regularity}]
Set 
$$
v_1(x)=\frac{u(x_0+\rho x)}{\tau}\quad \text{with}\quad x\in B_1,
$$ 
for $\tau,\rho>0$ constants to be chosen later. As $u$ solves \eqref{equation}, we can verify that $v_1$ solves
\begin{equation}\label{Eq.v1}
|\nabla v|^pF_1(D^2v)+\mathfrak{a}_1(x)|\nabla v|^q=\lambda_1(x)v^{\mu}_+\quad\text{in}\,\, B_1,
\end{equation}
in the viscosity sense, where
\begin{align*}
F_1(M)  &\coloneqq  \left(\frac{\tau}{\rho^2}\right)^{-1}F\left(\frac{\tau}{\rho^2}M\right),\\[0.2cm]
\mathfrak{a}_1(x)  &\coloneqq  \frac{\rho^{p-q+2}}{\tau^{p-q+1}}\mathfrak{a}(x_0+\rho x),\\
\lambda_1(x)&\coloneqq \frac{\rho^{p+2}}{\tau^{p+1-\mu}}\lambda_0(x_0+\rho x).    
\end{align*}
Note that $F_1$ is still a $(\lambda,\Lambda)$-elliptic operator. We then make the following choices
\begin{align*}
    \tau \coloneqq \max\left\{1,\|u\|_{\infty},(2\|\mathfrak{a}\|_{\infty})^{\frac{1}{1+p-q}},(2\|\lambda_0\|_{\infty})^{\frac{1}{p+1-\mu}}\right\} \quad\text{and}\quad
    \rho \coloneqq \min\left\{\frac{\operatorname{dist}(\Omega',\partial\Omega)}{2},\gamma^{\frac{1}{p+2-q}}\right\},
\end{align*}
where $\gamma>0$. With that, we get
\begin{equation*}
\begin{array}{ccccc}
\|\mathfrak{a}_1(x)\|_{L^{\infty}(B_1)}\leq \frac{\gamma}{2\|\mathfrak{a}\|_{\infty}}\|\mathfrak{a}(x_0+\rho x)\|_{L^{\infty}(B_1)}\leq\frac{\gamma}{2};\\[0.3cm]
\|\lambda_1\|_{L^{\infty}(B_1)}\leq\frac{\gamma^{\frac{p+2}{2+p-q}}}{2\|\lambda_0\|_{\infty}}\|\lambda_0(x_0+\rho x)\|_{L^{\infty}(B_1)}\leq\frac{\gamma}{2};\\[0.3cm]
\|v_1(x)\|_{L^{\infty}(B_1)}\leq 1\quad\text{and}\quad v_1(0)=0.
\end{array}
\end{equation*}
Thus, we can apply the Lemma \ref{estimaHA} to $v_1$, which guarantees the existence of a constant $\delta=\delta(2^{-\beta})$ such that
\begin{equation}
\sup_{B_{1/2}}v_1(x)\leq 2^{-\beta},
\end{equation}
whenever $0<\gamma\leq\delta(2^{-\beta})$. Now, let us define
$$
v_2(x)=2^{\beta}v_1(x/2)\quad\text{in } B_1.
$$
Since $v_1$ solves the equation \eqref{Eq.v1}, we can see that $v_2$ is a viscosity solution of
$$
|\nabla v|^pF_2(D^2v)+\mathfrak{a}_2(x)|\nabla v|^q=\lambda_2(x)v^{\mu}_+,
$$
where
\begin{align*}
F_2(M) & \coloneqq  2^{\beta-2}F_1(2^{2-\beta}M),\\[0.2cm]
\mathfrak{a}_2(x)& \coloneqq 2^{\beta(p+1-q)-p-2+q}\mathfrak{a}_1(x/2),\\[0.2cm]
\lambda_2(x) & \coloneqq 2^{\beta(p+1-\mu)-p-2}\lambda_1(x/2).
\end{align*}
Once again, $F_2$ is also an $(\lambda,\Lambda)$-elliptic operator, and in addition by the choice of $\beta$, we have
$$
\beta(p+1-q)-p-2+q\leq 0\quad\text{and}\quad \beta(p+1-\mu)-p-2\leq 0.
$$
From this, we deduce 
$$
\|\mathfrak{a}_2(x)\|_{L^{\infty}(B_1)}\leq\|\mathfrak{a}_1(x)\|_{L^{\infty}(B_1)}\leq\frac{\gamma}{2}\quad\text{and}\quad\lambda_2\leq\lambda_1\leq\frac{\gamma}{2},
$$
which ensures that $v_2$ is under the hypothesis of the Lemma \ref{estimaHA}, that is
\begin{equation}
\sup_{B_{1/2}}v_2(x)\leq 2^{-\beta}.
\end{equation}
Next, arguing inductively, for each $k\in\mathbb{N}$ define
$$
v_k(x)\coloneqq 2^{\beta}v_{k-1}(x/2).
$$
Using the same arguments above, we invoke the Lemma \ref{estimaHA} once again, which implies
$$
\sup_{B_{1/2}}v_k(x)\leq 2^{-\beta}.
$$
After a rescaling
\begin{equation}
\sup_{B_{2^{-k}}}v_{1}\leq 2^{-k\beta}.
\label{v1estimate}
\end{equation}
Now, fix $0<r\leq\frac{\rho}{2}$, and take $k\in\mathbb{N}$ such that
\begin{equation}\label{lastestimate}
2^{-(k+1)}<\frac{r}{\rho}\leq 2^{-k}.
\end{equation}    
So, we have
\begin{equation*}
\sup_{B_r(x_0)}u(x)\leq\sup_{B_{\rho2^{-k}}(x_0)}u(x)=\tau\sup_{B_{2^{-k}}}v_1(x).
\end{equation*}
Consequently, by \eqref{v1estimate} and \eqref{lastestimate} we conclude that
$$
\sup_{B_r(x_0)}u(x) \leq  \tau 2^{-k\beta}\leq  (\tau\, 2^{\beta})2^{-(k+1)\beta} \leq \left(\frac{2}{\rho}\right)^{\beta}\tau\cdot r^{\beta},
$$
which prove the theorem.
\end{proof}

As a consequence of the Theorem \ref{improvement_regularity} and with the support of a barrier argument, we have the following result of positivity of solutions under certain conditions, which can also be seen as a strong maximum principle.

\begin{corollary}\label{strong.maximum.principle}
Let $u$ be a non-negative, bounded viscosity solution to \eqref{equation} with $\mu=p+1$ and $\mathfrak{a}\geq 0$ in $\Omega$. Then, if $\max\{0,p\}<q<p+1$, the following dichotomy
holds: either $u>0$ or $u\equiv 0$ in $\Omega$.
\end{corollary}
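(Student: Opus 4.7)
The strategy is a contradiction argument combining the free-boundary growth estimate of Theorem \ref{improvement_regularity} with a Hopf-type barrier construction. The plan is to assume $u\not\equiv 0$ yet $u(x_\ast)=0$ for some interior $x_\ast\in\Omega$ and reach a contradiction. Since $\{u>0\}$ is nonempty and open, I would select an interior point $y_0$ sufficiently close to $\partial\{u>0\}$, set $R\coloneqq\operatorname{dist}(y_0,\partial\{u>0\})$ so that $B_R(y_0)\subset\{u>0\}$ touches $\partial\{u>0\}$, and fix a contact point $x_0\in\partial B_R(y_0)\cap\partial\{u>0\}\cap\Omega$. Continuity produces $m>0$ with $u\geq m$ on $\overline{B_{R/2}(y_0)}$.

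The next step is to construct, in the annulus $A\coloneqq B_R(y_0)\setminus\overline{B_{R/2}(y_0)}$, an exponential barrier
$$
\phi(x)\coloneqq \kappa\bigl(e^{-\gamma|x-y_0|^2}-e^{-\gamma R^2}\bigr),\qquad \kappa,\gamma>0.
$$
Writing $\rho=|x-y_0|$, a direct calculation shows that the radial eigenvalue of $D^2\phi$ is $2\kappa\gamma(2\gamma\rho^2-1)e^{-\gamma\rho^2}$ while the $(n-1)$ tangential eigenvalues equal $-2\kappa\gamma e^{-\gamma\rho^2}$. For $\gamma$ large (depending on $n,\lambda,\Lambda,R$) one has $\mathcal{P}^-(D^2\phi)>0$ in $A$, and the leading term $|\nabla\phi|^pF(D^2\phi)$ is of order $\kappa^{p+1}\gamma^{p+2}e^{-(p+1)\gamma\rho^2}$, dominating the absorption term $\lambda_0\phi^{p+1}\leq\|\lambda_0\|_\infty\kappa^{p+1}e^{-(p+1)\gamma\rho^2}$ once $\gamma$ is tuned to $\|\lambda_0\|_\infty$. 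Since $\mathfrak{a}\geq 0$, the Hamiltonian contribution $\mathfrak{a}|\nabla\phi|^q$ has the favorable sign, and
$$
|\nabla\phi|^pF(D^2\phi)+\mathfrak{a}(x)|\nabla\phi|^q\geq\lambda_0(x)\phi^{p+1}\quad\text{in }A.
$$
After fixing $\kappa$ so that $\phi\leq m$ on $\partial B_{R/2}(y_0)$ (noting $\phi=0$ on $\partial B_R(y_0)$), Lemma \ref{ComparisonPrin} option (ii) applies---because $\inf_\Omega\lambda_0>0$ and $\phi$ is smooth, hence Lipschitz---and yields $\phi\leq u$ throughout $A$.

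Setting $\nu\coloneqq(x_0-y_0)/R$ and $x_t\coloneqq x_0-t\nu$ so that $|x_t-y_0|=R-t$, a first-order expansion gives
$$
u(x_t)\geq\phi(x_t)=2\kappa\gamma R\,e^{-\gamma R^2}\,t+O(t^2)\geq c_1 t
$$
for small $t>0$. On the other hand, exploiting the bound $u^{p+1}\leq\|u\|_\infty^{p+1-\mu'}u^{\mu'}$ valid for any $\mu'\in(0,p+1)$, $u$ can be viewed as a subsolution of \eqref{equation} with exponent $\mu'$ and modified coefficient $\lambda_0\|u\|_\infty^{p+1-\mu'}$, to which Theorem \ref{improvement_regularity} applies. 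As $\mu'\nearrow p+1$ the alternative exponent $(p+2)/(p+1-\mu')\to\infty$, so the binding value of $\beta$ reduces to the Hamiltonian branch $\beta_H\coloneqq(p+2-q)/(p+1-q)$, which satisfies $\beta_H>1$ since $q<p+1$ (indeed $\beta_H>2$ under the hypothesis $q>\max\{0,p\}$). Consequently $u(x_t)\leq C\,t^{\beta_H}$ near $x_0$, and combining with the linear lower bound produces $c_1\leq Ct^{\beta_H-1}\to 0$ as $t\to 0^+$, the desired contradiction.

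The hard part of this plan is rigorously extending Theorem \ref{improvement_regularity} to the critical regime $\mu=p+1$, formally outside its stated range. The subsolution reduction $u^{p+1}\leq\|u\|_\infty^{p+1-\mu'}u^{\mu'}$ is natural, but the normalization constant in \eqref{growth} contains a factor $\|\lambda_0\|_\infty^{1/(p+1-\mu')}$ that could blow up as $\mu'\uparrow p+1$. This must be absorbed by localizing in a sufficiently small ball around $x_0$ (shrinking the effective $\|\lambda_0\|_\infty$) and re-running the iterative scheme of Lemma \ref{estimaHA} with the $\mu$-independent Hamiltonian-scaled rescaling $u_r(y)\coloneqq u(x_0+ry)/r^{\beta_H}$, which preserves the sub-equation structure uniformly as $\mu\to p+1$ and delivers the required growth $u(x)\leq C|x-x_0|^{\beta_H}$ at the free boundary point.
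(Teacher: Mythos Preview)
Your overall strategy---exponential barrier in an annulus, comparison principle for a linear lower bound at the contact point, then Theorem \ref{improvement_regularity} for a super-linear upper bound---is exactly the paper's route. The difference lies in how Theorem \ref{improvement_regularity} is invoked at the critical exponent $\mu=p+1$, and here you have made the problem harder than it is.

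You push $\mu'\nearrow p+1$ in order to force the growth exponent down to the Hamiltonian branch $\beta_H=(p+2-q)/(p+1-q)$, then worry that the constant $\|\lambda_0\|_\infty^{1/(p+1-\mu')}$ blows up, and propose re-running the flatness iteration as a remedy. None of this is needed. The paper simply \emph{fixes} one $\tilde\mu\in(\max\{p,0\},p+1)$ and rewrites the right-hand side as an \emph{equality},
\[
\lambda_0(x)\,u^{p+1}_+=\omega(x)\,u^{\tilde\mu}_+,\qquad \omega(x)\coloneqq \lambda_0(x)\,u^{p+1-\tilde\mu}_+(x)\in \mathrm{C}^0\cap L^\infty,
\]
so that $u$ is a genuine viscosity \emph{solution} (not merely a one-sided inequality) of \eqref{equation} with exponent $\tilde\mu$ and bounded coefficient $\omega$. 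Theorem \ref{improvement_regularity} then applies verbatim with a finite constant and yields $u\leq C r^{\tilde\beta}$ for
\[
\tilde\beta=\min\left\{\frac{p+2-q}{p+1-q},\ \frac{p+2}{p+1-\tilde\mu}\right\}>1.
\]
Since the barrier already gives the linear lower bound $u\geq c_1 t$, \emph{any} exponent strictly larger than $1$ suffices for the contradiction; there is no reason to chase $\beta_H$ and hence no limit to take. Your ``hard part'' disappears. Note also that your inequality $u^{p+1}\leq\|u\|_\infty^{p+1-\mu'}u^{\mu'}$ makes $u$ a \emph{super}solution (not a subsolution) of the modified equation in the paper's convention, and Theorem \ref{improvement_regularity} is stated only for solutions; the exact-equality trick with $\omega$ sidesteps that issue as well.
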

\begin{proof}
    Following the ideas in \cite{Leitao}, we shall do the proof by contradiction. To do this, suppose that there exists $x'\in\Omega$ such that $u(x')=0$, but $u\not\equiv 0$ in $\Omega$. Let $x_0\in\Omega$ be such that $u(x_0)>0$, then we can suppose without loss of generality that
    $$d_0\coloneqq\text{dist}(x_0,\partial\{u>0\})<\frac{1}{5}\text{dist}(x_0,\partial\Omega).$$
    Let $\eta$ and $s$ be positive values which will be chosen later, and define the following barrier function
    $$\theta_{\eta,s}(x)\coloneqq\eta\frac{e^{-s\frac{|x-x_0|^2}{d^2_0}}-e^{-s}}{e^{-\frac{s}{4}}-e^{-s}}.$$
    For appropriate values of $\eta$ and $s$, the aim is to show that $\theta_{\eta,s}$ solves in viscosity sense
\begin{equation*}
\left\{
\begin{array}{rclll}
\mathcal{L}[\theta_{\eta,s}](x) &\geq & 0 & \text{in}  & B_{d_0}(x_0)\setminus B_{\frac{d_0}{2}}(x_0), \\
\theta_{\eta,s} & = & \eta & \text{in} & \partial B_{\frac{d_0}{2}}(x_0),\\
\theta_{\eta,s} & = & 0 & \text{in} & \partial B_{d_0}(x_0),
\end{array}
\right.
\end{equation*}
where 
$$
\mathcal{L}[u](x)=|\nabla u|^p F(D^2u)+\mathfrak{a}(x)|\nabla u|^q-\lambda_0(x)u^{p+1}_+(x).
$$
Let $\theta(x)=\theta_{\eta,s}(x)$ be, then a direct computation shows us
\begin{align}
\theta_i(x)&=-\frac{\eta s}{d^2_0}\cdot\frac{e^{-s\frac{|x-x_0|^2}{d^2_0}}}{e^{-\frac{s}{4}}-e^{-s}}\cdot 2(x_i-(x_0)_i),\label{PosiGrad}\\
\theta_{ij}(x) &= \frac{2\eta s}{d^2_0}\cdot\frac{e^{-s\frac{|x-x_0|^2}{d^2_0}}}{e^{-\frac{s}{4}}-e^{-s}}\left[-\delta_{ij}+\frac{2s}{d^2_0}(x_i-(x_0)_i)\cdot(x_j-(x_0)_j)\right].
\end{align}
Note that, for $x\in B_{d_0}\setminus B_{\frac{d_0}{2}}$ and $\hat{x}=(|x|+(x_0)_1,(x_0)_2,...,(x_0)_n)$, we find
\begin{equation*}
\begin{array}{lllll}
\theta_{11}(\hat{x}) &=& \frac{2\eta s}{d^2_0}\cdot\frac{e^{-s\frac{|x-x_0|^2}{d^2_0}}}{e^{-\frac{s}{4}}-e^{-s}}\left[-1+\frac{2s}{d^2_0}|x|^2\right], &\\[0.5cm]
\theta_{ii}(\hat{x}) &=& -\frac{2\eta s}{d^2_0}\cdot\frac{e^{-s\frac{|x-x_0|^2}{d^2_0}}}{e^{-\frac{s}{4}}-e^{-s}}, & \text{if} & i\neq 1;\\[0.5cm]
\theta_{ij}(\hat{x}) &=& 0, & \text{if} & i\neq j.
\end{array}
\end{equation*}
If $s>2$, we get $\theta(\hat{x})_{11}>0$, because $|x|^2\geq \frac{d^2_0}{4}$. Thus, by symmetric invariance of $\theta$ and ellipticity of $F$, we have \begin{align}\label{OperatorEli}
F(D^2\theta(x)) &\geq \mathcal{P}^-_{\lambda,\Lambda}((D^2\theta(x))^+)+F(D^2\theta(x)^-)\nonumber\\
&\geq  \mathcal{P}^-_{\lambda,\Lambda}((D^2\theta(x))^+)-\Lambda\|(-D^2\theta(x))^+\|\nonumber\\
&\geq  \lambda\frac{2\eta s}{d^2_0}\cdot\frac{e^{-s\frac{|x-x_0|^2}{d^2_0}}}{e^{-\frac{s}{4}}-e^{-s}}\left[-1+\frac{s}{2}\right]-\Lambda\frac{2\eta s}{d^2_0}\cdot\frac{e^{-s\frac{|x-x_0|^2}{d^2_0}}}{e^{-\frac{s}{4}}-e^{-s}}\nonumber\\
&= \frac{2\eta s}{d^2_0}\cdot\frac{e^{-s\frac{|x-x_0|^2}{d^2_0}}}{e^{-\frac{s}{4}}-e^{-s}}\left[-\lambda+\frac{\lambda s}{2}-\Lambda\right].
\end{align}
Hence, by \eqref{PosiGrad} and \eqref{OperatorEli}
\begin{eqnarray*}
&&|D\theta|^p F(D^2\theta)+\mathfrak{a}(x)|D\theta|^q-\lambda_0(x)\theta^{p+1}_+(x)\\
&\geq & |x-x_0|^p \left[2\eta s\frac{e^{-s\frac{|x-x_0|^2}{d^2_0}}}{d^2_0(e^{-\frac{s}{4}}-e^{-s})}\right]^{p+1}\left[-\lambda+\frac{\lambda s}{2}-\Lambda\right]\\
&& +\mathfrak{a}(x)\left[2\eta s\frac{e^{-s\frac{|x-x_0|^2}{d^2_0}}}{d^2_0(e^{-\frac{s}{4}}-e^{-s})}\right]^q-\|\lambda_0\|_{\infty}\left[\eta\frac{e^{-s\frac{|x-x_0|^2}{d^2_0}}-e^{-s}}{e^{-\frac{s}{4}}-e^{-s}}\right]^{p+1}\\
&\geq & \mathcal{A}^p \left[2\eta s\frac{e^{-s\frac{|x-x_0|^2}{d^2_0}}}{d^2_0(e^{-\frac{s}{4}}-e^{-s})}\right]^{p+1}\left[-\lambda+\frac{\lambda s}{2}-\Lambda\right]\\
&& -\|\lambda_0\|_{\infty}\left[\eta\frac{e^{-s\frac{|x-x_0|^2}{d^2_0}}-e^{-s}}{e^{-\frac{s}{4}}-e^{-s}}\right]^{p+1}\\
&\geq & \left[\mathcal{A}^p2^{p+1}s^{p+1}\left(-\lambda+\frac{\lambda s}{2}-\Lambda\right)-\|\lambda_0\|_{\infty}\right]\left[\eta\frac{e^{-s\frac{|x-x_0|^2}{d^2_0}}-e^{-s}}{e^{-\frac{s}{4}}-e^{-s}}\right]^{p+1},
\end{eqnarray*}
where $\mathcal{A}^p=\min\{d^p_0,(d_0/2)^p\}$. Then, for $s$ large enough such that 
$$
s^{p+1}\left(-\lambda+\frac{\lambda s}{2}-\Lambda\right)\geq \frac{\|\lambda_0\|_{\infty}}{\mathcal{A}^p2^{p+1}},
$$
we conclude that
\begin{equation}\label{inequaliteCon}
\mathcal{L}[\theta_{\eta,s}](x)\geq 0\quad\text{in}\quad B_{d_0}(x_0)\setminus B_{\frac{d_0}{2}}(x_0).
\end{equation}    
From the above estimates, we see that inequality \eqref{inequaliteCon} holds for all $\eta>0$. With that, by taking  $\eta$ small enough such that $\eta\leq \inf_{B_{\frac{d_0}{2}}(x_0)}u(x)$, we have in the viscosity sense
\begin{equation*}
\begin{cases}
        \mathcal{L}[u](x)= 0\leq\mathcal{L}[\theta_{\eta,s}](x)\quad\text{in}\quad B_{d_0}(x_0)\setminus B_{\frac{d_0}{2}}(x_0),\\
        \quad\quad\quad\quad\quad\quad\,\,\,\,\,\,\theta_{\eta,s}\leq u\quad\text{on}\quad\partial B_{d_0}(x_0)\cup \partial B_{\frac{d_0}{2}}(x_0).
    \end{cases}
    \end{equation*}
    Therefore, by comparison principle, Lemma \ref{ComparisonPrin}
    \begin{equation}\label{ComparisonIneq}
        \theta_{\eta, s}\leq u\quad\text{in } B_{d_0}(x_0)\setminus B_{\frac{d_0}{2}}(x_0).
    \end{equation}

Now, notice that
\begin{align}
\inf_{B_{d_0}(x_0)\setminus B_{\frac{d_0}{2}}(x_0)}|D\theta_{\eta,s}(x)|& \geq \frac{\eta s d^{-1}_0 e^{-s}}{e^{-\frac{s}{4}}-e^{-s}}\nonumber\\[0.2cm]
&\geq \frac{5\eta se^{-s}}{\operatorname{dist}(x_0,\partial\Omega)(e^{-\frac{s}{4}}-e^{-s})}\nonumber\\[0.2cm]
&\coloneqq \varsigma>0. \label{GradEst}
\end{align}
Moreover, for any $\max\{p,0\}<\tilde{\mu}<p+1$ fixed, define
$$
\omega(x)\coloneqq \lambda_0(x)u^{p+1-\tilde{\mu}}_+(x).
$$
Then $u$ solves the following equation
$$
|\nabla u|^pF(D^2u)+\mathfrak{a}(x)|\nabla u|^q=\omega(x)u^{\tilde{\mu}}_+(x)\quad\text{in } \Omega.
$$
Thus, for $z\in\partial B_{d_0}(x_0)\cap\partial\{u>0\}$ we can apply Theorem \ref{growth} which gives us
$$
\sup_{B_r(z)}u(x)\leq \mathrm{C}(n,\lambda,\Lambda,p,q,\mu,\|\mathfrak{a}\|_{\infty},\|\omega\|_{\infty})\cdot r^{\tilde{\beta}},
$$
with $\tilde{\beta}=\min\left\{\frac{p+2-q}{p+1-q},\frac{p+2}{p+1-\tilde{\mu}}\right\}$. Assuming that $\tilde{\beta}=\frac{p+2-q}{p+1-q}$, we obtain 
$$
\sup_{B_r(z)}u(x)\leq Cr^{\frac{p+2-q}{p+1-q}}\leq C r^{p+2-q},
$$
for $r\ll 1$, since $p+1-q<1$. Now, note that $p+2-q>1$, then we can choose $0<r_0\ll 1$ such that
\begin{equation}\label{contradii}
Cr^{p+2-q}_0\leq\frac{1}{2}\varsigma\, r_0.
\end{equation}  
Finally, by \eqref{ComparisonIneq}, \eqref{GradEst} and \eqref{contradii}, we conclude
    $$\varsigma\,r_0\leq\sup_{B_{r_0}(z)}|\theta(x)-\theta(z)|\leq\sup_{B_{r_0}(z)}\theta(x)\leq\sup_{B_{r_0}(z)}u\leq Cr^{p+2-q}_0\leq\frac{1}{2}\varsigma\,r_0,$$
    which is a contradiction. Similarly, if $\tilde{\beta}=\frac{p+2}{p+1-\tilde{\mu}}$  we also reach a contradiction and the proof of Corollary is complete.
\end{proof}

\section{Non-degeneracy estimates: Proof of Theorem \ref{ND_estimates}}\label{ND_Section}

Next, we prove that the growth rate on the free boundary in the Theorem \ref{improvement_regularity} is indeed optimal by imposing a restriction on the value of $\beta$. This is done through downward controlled growth, also of order $\beta$.

\begin{proof}[\textit{Proof of Theorem \ref{ND_estimates}}]
Notice that, by assumption $\beta=\frac{p+2}{p+1-\mu}$. In the sequel, due to the continuity of solutions, it is sufficient to prove that such an estimate is satisfied just at point within $\{u>0\}\cap B_{1/2}$.

Without loss of generality, we assume that $0\in \{u>0\}\cap B_{1/2}$ and define the scaled function
$$
    u_r(x)=\frac{u(rx)}{r^\beta}\quad\text{for}\quad x\in B_1.
$$
Also, let us introduce the comparison function
$$
\Xi(x)= A|x|^\beta,
$$
where $A$ is a constant to be chosen later. A simple calculation shows us that
$$
    \Xi_{ij}(x)= A\beta\left[(\beta-2)|x|^{\beta-4}x_ix_j +|x|^{\beta-2}\delta_{ij}\right].
$$
Then in point of the form $(|x|,0,\ldots,0)$, we obtain
\begin{equation*}
    \begin{array}{lcr}
    \Xi_{11} =  A\beta(\beta-1)|x|^{\beta-2},& & \\[0.2cm]
    \Xi_{ii} = A\beta|x|^{\beta-2} &\text{if} & i>1,\\[0.2cm]
    \Xi_{ij} = 0  &\mbox{if} & i\neq j.
    \end{array}
\end{equation*}

Now, consider
\begin{align*}
    \mathcal{F}_r(x,M) & \coloneqq r^{2-\beta}F\left(rx,r^{\beta -2}M\right),\\[0.2cm]
\mathfrak{a}_r(x) &\coloneqq  r^{p+2-q-\beta(p+1-q)}\mathfrak{a}(rx),\\[0.2cm]
\lambda_{0,r}(x) &\coloneqq \lambda_0(rx).
\end{align*}
In particular, by symmetric invariance of $\Xi$ and \textbf{\ref{unif ellipticity}}, we obtain that 
\begin{align}
\mathcal{F}_r(D^2\Xi(x)) &=r^{2-\beta}F\left(r^{\beta -2}D^2\Xi(x)\right)\nonumber\\[0.2cm]
& \leq \mathcal{P}^+_{\lambda,\Lambda}(D^2\Xi(x))\nonumber\nonumber\\[0.2cm]
& \leq \Lambda[A\beta(\beta-1)+(n-1)A\beta]|x|^{\beta-2}\nonumber\\[0.2cm]
&= \Lambda A\beta (n+\beta-2)|x|^{\beta-2}\label{ND1}.
\end{align}
Then, by \eqref{ND1}, we obtain
\begin{eqnarray}
& & |\nabla \Xi|^p\mathcal{F}_r(D^2\Xi)+\mathfrak{a}_r(x)|\nabla \Xi|^q-\lambda_{0,r}(x)\,\Xi^{\mu}\nonumber\\[0.2cm]
    &\leq & \Lambda A^{p+1}\beta^{p+1}(\beta+n-2)|x|^{(\beta-1)(p+1)-1}\label{NDEqInequality}\\
    & & +\|\mathfrak{a}\|_{\infty}r^{p+2-q-\beta(p+1-q)}A^q\beta^q|x|^{(\beta-1) q}-(\inf_{B_1}\lambda_0)A^{\mu}|x|^{\beta \mu}.\nonumber
\end{eqnarray}
Note that
\begin{equation}\label{NDbeta1}
    (\beta-1)p+\beta-2 \leq (\beta-1)q \Leftrightarrow \beta\leq\frac{p+2-q}{p+1-q},
\end{equation}
and
\begin{equation}\label{NDbeta2}
    (\beta-1)p+\beta-2 = \beta\mu \Leftrightarrow \beta=\frac{p+2}{p+1-\mu}.
\end{equation}
Hence, by \eqref{NDEqInequality}, \eqref{NDbeta1}, and \eqref{NDbeta2}, we have 
\begin{eqnarray}
    & & |\nabla \Xi|^p\mathcal{F}_r(x,D^2\Xi)+\mathfrak{a}_r(x)|\nabla \Xi|^q-\lambda_{0,r}(x)\,\Xi^{\mu}\nonumber\\[0.2cm]
    &\leq & \left[\Lambda A^{p+1}\beta^{p+1}(\beta+n-2)
    +\|\mathfrak{a}\|_{\infty}A^q\beta^q - (\inf_{B_1}\lambda_0)A^{\mu}\right]|x|^{\beta\mu}.\label{NDlastinequality}
\end{eqnarray}
By hypothesis, 
$$
    \frac{p+2-q}{p+1-q}\geq\frac{p+2}{p+1-\mu},
$$ 
which implies $\mu<q$, and furthermore $\mu<p+1$, then for $0<A\ll 1$, the term in \eqref{NDlastinequality} is non-positive. In other words, we conclude that
$$
|\nabla \Xi|^p\mathcal{F}_r(D^2\Xi)+\mathfrak{a}_r(x)|\nabla \Xi|^q\leq\lambda_{0,r}(x)\Xi^\mu\quad\text{in } B_1,
$$
in the viscosity sense. Moreover,
$$
|\nabla u_r|^p\mathcal{F}_r(D^2u_r)+\mathfrak{a}_r(x)|\nabla u_r|^q=\lambda_{0,r}(x)u_r^\mu.
$$
Finally, there exists a point $z_0\in \partial B _1$ such that
\begin{equation}\label{NC}
     u_r(z_0)>\Xi(z_0),
\end{equation}
otherwise, if $u_r\leq \Xi$ on the whole boundary of $B_1$, the Comparison Principle (Lemma \ref{ComparisonPrin}), would imply that $u_r\leq \Xi$ in $B_1$, which clearly contradicts the assumption that $u_r(0)>0$. Finally, by \eqref{NC}, we obtain
$$
    \sup_{B_1}u_r\geq\sup_{\partial B_1}u_r \geq u_r(z_0)>\Xi(z_0) =A.
$$
Thus, by definition of $u_r$, we finish the proof of the theorem.
\end{proof}

To complete the section, we display a counter example showing that non-degeneracy can fails when
$$
    \frac{p+2-q}{p+1-q}<\frac{p+2}{p+1-\mu}.
$$

\begin{example}
In general, when $\beta=(p+2-q)/(p+1-q)$ non-degeneracy can fail even considering the exponent
$$
\overline{\beta}=\max\left\{\frac{p+2-q}{p+1-q},\frac{p+2}{p+1-\mu}\right\}.
$$
Indeed, given $\varepsilon>0$, the function $u(x)=|x|^{\frac{p+2}{p+1-\mu}+\varepsilon}$ satisfies
$$
|\nabla u|^p\Delta u+\mathfrak{a}(x)|\nabla u|^{\mu}=\lambda_0(x)u^{\mu}\quad\text{in } B_1,
$$
where
\begin{equation*}
\begin{array}{l}
    \mathfrak{a}(x)=|x|^{\mu+\varepsilon(p+1-\mu)}+c_0|x|^{\mu},\\
    \lambda_0(x)=\overline{c}|x|^{\varepsilon(p+1-\mu)}+\gamma,
\end{array}
\end{equation*}
for $\gamma>0$, and
\begin{eqnarray*}
    \overline{c}\ &=& \left(\frac{p+2}{p+1-\mu}+\varepsilon+n-2\right)\left(\frac{p+2}{p+1-\mu}+\varepsilon\right)^{p+1}+\left(\frac{p+2}{p+1-\mu}+\varepsilon\right)^{\mu}\\
    c_0 &=& \gamma\left(\frac{p+2}{p+1-\mu}+\varepsilon\right)^{-\mu}.
\end{eqnarray*}
In this case, $q=\mu$ and then
$$
\frac{p+2-q}{p+1-q}<\frac{p+2}{p+1-\mu},$$
that is, $\overline{\beta}=(p+2)/(p+1-\mu)$, but there is no constant $C>0$ such that
$$
Cr^{\overline{\beta}}\leq\sup_{B_r}u=r^{\overline{\beta}+\varepsilon},
$$
for all $r\in(0,1/2)$.
\end{example}

\subsection*{Acknowledgments}

Rafael R. Costa has been supported by the Coordenação de Aperfeiçoamento de Pessoal de Nível Superior (CAPES)–Brasil under Grant No. 88887.702383/2022-00. Ginaldo S. S\'{a} expresses gratitude for the PDJ-CNPq-Brazil, Postdoctoral Fellowship Grant No. 174130/2023-6.

\end{document}